   \def\to{\rightarrow}
 \def\ls{\lesssim} \def\gs{\gtrsim}
\def\la{\langle} \def\ra{\rangle}
\def\R{\mathbb{R}} \def\C{\mathbb{C}} \def\Z{\mathbb{Z}}
\def\N{\mathbb{N}} \def\S{\mathbb{S}}
\newcommand{\eps}{\epsilon} \DeclareMathOperator{\dist}{d}
\DeclareMathOperator{\med}{med}
\DeclareMathOperator{\supp}{supp}
\def\beq{\begin{equation}} \def\eeq{\end{equation}}
\def\beq{\begin{equation}} \def\eeq{\end{equation}}
\newtheorem{thm}{Theorem} 
\newtheorem{pro}[thm]{Proposition} \newtheorem{lem}[thm]{Lemma}
\theoremstyle{remark} \newtheorem{rmk}[thm]{Remark}
\theoremstyle{definition} 
\numberwithin{equation}{section} \numberwithin{thm}{section}
\begin{document}
\title[GWP and scattering for the Dirac-Klein-Gordon system]{On global
  well-posedness and scattering for the massive Dirac-Klein-Gordon system}

\author[I.~Bejenaru]{Ioan Bejenaru} \address[I.~Bejenaru]{Department
  of Mathematics, University of California, San Diego, La Jolla, CA
  92093-0112 USA} \email{ibejenaru@math.ucsd.edu}

\author[S.~Herr]{Sebastian Herr} \address[S.~Herr]{Fakult\"at f\"ur
  Mathematik, Universit\"at Bielefeld, Postfach 10 01 31, 33501
  Bielefeld, Germany} \email{herr@math.uni-bielefeld.de}

\begin{abstract}
  We prove global well-posedness and scattering for the massive
  Dirac-Klein-Gordon system with small initial data of subcritical
  regularity in dimension three. To achieve this, we impose a non-resonance condition on the masses.
\end{abstract}
\subjclass[2010]{Primary: 35Q40; Secondary: 35Q41, 35L70}

\maketitle

\section{Introduction}\label{sect:intro}

The Dirac-Klein-Gordon system is a basic model of proton-proton
interactions (one proton is scattered in a meson field produced by a
second proton) or neutron-neutron interaction, see Bjorken and Drell
\cite{BjDr}.  In physics these are known as the strong interactions
which are responsible for the forces which bind nuclei.
  
The mathematical formulation of the Dirac-Klein-Gordon system is as
follows, see e.g.\ \cite{DaFoSe}:
\begin{equation} \label{DKG} \left\{
    \begin{aligned} 
      & (-i \gamma^\mu \partial_\mu + M ) \psi= \phi \psi   \\
      & (\Box + m^2) \phi = \psi^\dag \gamma^0 \psi
    \end{aligned}
  \right.
\end{equation}
Here, $\Box$ denotes the d'Alembertian $\Box=\partial_t^2-\Delta_x$,
$\psi: \R^{1+3} \rightarrow \C^4$ is the spinor field (column vector),
and $\phi: \R^{1+3}\rightarrow \R$ is a scalar field. For
$\mu=0,\ldots,3$, $\gamma^\mu$ are the $4 \times 4$ Dirac matrices
given by
\[
\gamma^0= \left( \begin{array}{cc} I_2 & 0 \\ 0 & -I_2 \end{array}
\right) , \qquad \gamma^j=\left( \begin{array}{cc} 0 & \sigma^j \\
    -\sigma^j & 0 \end{array} \right)
\]
where for $j=1,2,3$ the Pauli matrices $\sigma^j$ are
\[
\sigma^1= \left( \begin{array}{cc} 0 & 1 \\ 1 & 0 \end{array} \right)
, \qquad \sigma^2=\left( \begin{array}{cc} 0 & -i \\ i & 0 \end{array}
\right) , \qquad \sigma^3=\left( \begin{array}{cc} 1 & 0 \\ 0 &
    -1 \end{array} \right).
\]
$\psi^\dag$ denotes the conjugate transpose of $\psi$, i.e.\
$\psi^\dag=\overline{\psi}^t$. The matrices $\gamma^\mu$ satisfy the
following properties
\[
\gamma^\alpha \gamma^\beta + \gamma^\beta \gamma^\alpha = 2 g^{\alpha
  \beta} I_4, \qquad g^{\alpha \beta}= \mathrm{diag}(1,-1,-1,-1).
\]
We will study the Cauchy problem with initial condition
\begin{equation}\label{eq:i-cond}(\psi,\phi,\partial_t
  \phi)|_{t=0}=(\psi_0, \phi_0, \phi_1).
\end{equation}

Before turning to the mathematical analysis of the Dirac-Klein-Gordon
Equations we highlight a key property of the physical model presented
in Bjorken and Drell \cite[Chapter 10.2]{BjDr}. The mass $M$ is
effectively $938 \frac{MeV}{c^2}$ (proton) or $939 \frac{MeV}{c^2}$
(neutron). There are many types of meson fields, but those believed to
be major contributors to the nuclear force at large distances are the
$\pi$-mesons (pions) and their masses are $m=140 \frac{MeV}{c^2}$ for
$\pi^\pm$, $m=135 \frac{MeV}{c^2}$ for $\pi^0$. Heavier mesons such as
the K mesons (kaons) may also play a role for small impact parameter
collisions; the masses of a kaons are $m=494 \frac{MeV}{c^2}$ for
$K^\pm$ and $m=498 \frac{MeV}{c^2}$ for $K^0$.  It is then reasonable
to assume that in the Dirac-Klein-Gordon Equations it holds
\[
2M > m > 0.
\]
We are not implying that all mesons are lighter than baryons (protons
or neutrons in our context), but that this is a reasonable assumption
in the context of our model. Higher energy (more massive) mesons were
created momentarily in the Big Bang but are not thought to play a role
in nature today. Such particles are also regularly created in
experiments; for instance the heaviest meson created is the upsilon
meson with mass $9.46 \frac{GeV}{c^2}$ (roughly $10$ times the mass of
the proton/neutron). However these heavy mesons do not play a role in
the model described by Dirac-Klein-Gordon Equations.
  
We now turn our attention to the mathematical aspects of
\eqref{DKG}. The fundamental question is that of global regularity of
solutions. For smooth and small initial data endowed with additional
algebraic structure, Chadam and Glassey \cite{ChGl} established global
regularity for solutions of \eqref{DKG}. The work of Klainerman
\cite{Kl} on nonlinear Klein-Gordon equations paved the way of
establishing a more general result. Following those ideas and taking
advantage of the null structure present in the system, Bachelot
\cite{Ba} established global regularity for (very) smooth and small
initial data.  The next direction of research was to obtain a local in
time result for rough data as close as possible to the critical space
which is
\[\psi_0 \in L^2 ,\quad (\phi_0,\phi_1) \in H^{\frac12} \times
H^{-\frac12}.\]
 
Beals and Bezard \cite{BeBe} proved that for small initial data
$(\phi_0,\phi_1) \in H^2 \times H^1, \psi_0 \in H^1$ one has a local
well-posedness theory for \eqref{DKG}. Bournaveas in \cite{Bo}
improved this local in time result to $(\phi_0,\phi_1) \in
H^{1+\epsilon} \times H^{\epsilon}, \psi_0 \in H^{\frac12+\epsilon}$,
for any $\epsilon > 0$. In \cite{DaFoSe} D'Ancona,
Foschi and Selberg  established local
well-posedness of \eqref{DKG} for data $(\phi_0,\phi_1) \in
H^{\frac12+\epsilon} \times H^{-\frac12+\epsilon}, \psi_0 \in
H^{\epsilon}$, for any $\epsilon > 0$; hence the last result covers
the full subcritical regime.

Recently, Wang \cite{Wa} proved a global in time result for small initial data
in the critical Besov space $(\phi_0,\phi_1) \in \dot
B^{\frac12}_{2,1} \times \dot B^{-\frac12}_{2,1}, \psi_0 \in
\dot B^0_{2,1}$ (for $M=m=0$), additionally assuming that an angular derivative is bounded in the same space;
the proof exploits the observation of Sterbenz \cite{St} that angular
regularity acts as a null-structure. The result is then extended to
non-zero masses under the condition $2M>m>0$.

It is worth mentioning that in all of the above results the masses
$M,m$ are arbitrary; the result in \cite{Wa} is an exception. In the
context of a local in time result, the terms $M \psi$, $m^2 \phi$ can
be treated as perturbations, thus allowing an analysis of \eqref{DKG}
as a system of wave equations. Obviously, this cannot be the case for a
global in time theory which includes scattering.

In the context of the cubic Dirac system \cite{BH} we proposed a
different approach that incorporates the terms $M \psi$ and $m^2 \phi$
into the linear part of the operator, as they naturally appear. This
will help us treat \eqref{DKG} as a system of (half) Klein-Gordon
equations after using projectors which are adapted to our context from
the work of D'Ancona, Foschi and Selberg \cite{DaFoSe}. Then we
restrict our attention to the physical relevant case $2M > m >0$ and obtain
a global (in time) result and scattering for small initial data in the
subcritical regime. The resolution spaces used here have a simpler structure compared to \cite{BH}.
Our main result is the following
\begin{thm}\label{thm:main}
  Assume that $\epsilon > 0$ and $2M > m >0$. Then the Cauchy problem
  \eqref{DKG}-\eqref{eq:i-cond} is globally well-posed for small
  initial data
  \[\psi_0 \in H^{\epsilon}(\R^3;\C^4),\; (\phi_0,\phi_1) \in
  H^{\frac12+\epsilon}(\R^3;\R) \times H^{-\frac12+\epsilon}(\R^3;\R)\] and
  these solutions scatter to free solutions for $t\to \pm \infty$.
\end{thm}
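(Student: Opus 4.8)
\noindent\emph{Proof strategy.} The plan is to diagonalize \eqref{DKG} into a first-order system of half-Klein-Gordon equations, solve it by a contraction argument in spacetime resolution spaces adapted to those flows, and read off scattering from the structure of the spaces. First I would introduce the Dirac operator $\mathcal D_M:=-i\gamma^0\gamma^j\partial_j+M\gamma^0$ (summation over $j=1,2,3$), which is self-adjoint and satisfies $\mathcal D_M^2=\langle D\rangle_M^2$ with $\langle D\rangle_M:=\sqrt{M^2-\Delta}$; its spectral projectors are $\Pi_\pm:=\tfrac12\bigl(I\pm\langle D\rangle_M^{-1}\mathcal D_M\bigr)$. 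Multiplying the Dirac equation by $\gamma^0$ and projecting, the splitting $\psi=\psi_++\psi_-$ with $\psi_\pm:=\Pi_\pm\psi$ turns it into $(-i\partial_t\pm\langle D\rangle_M)\psi_\pm=\Pi_\pm\bigl(\gamma^0\phi\psi\bigr)$. Likewise, with $\langle D\rangle_m:=\sqrt{m^2-\Delta}$ and $\phi_\pm:=\tfrac12\bigl(\phi\pm i\langle D\rangle_m^{-1}\partial_t\phi\bigr)$ one gets $\phi=\phi_++\phi_-$ and $(-i\partial_t\pm\langle D\rangle_m)\phi_\pm=\pm\tfrac12\langle D\rangle_m^{-1}\bigl(\psi^\dag\gamma^0\psi\bigr)$; the data become $\psi_{\pm,0}=\Pi_\pm\psi_0\in H^\epsilon$ and $\phi_{\pm,0}=\tfrac12(\phi_0\pm i\langle D\rangle_m^{-1}\phi_1)\in H^{\frac12+\epsilon}$. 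In Duhamel form one must solve
\[
\psi_\pm(t)=e^{\mp it\langle D\rangle_M}\psi_{\pm,0}+i\int_0^t e^{\mp i(t-s)\langle D\rangle_M}\,\Pi_\pm\bigl(\gamma^0(\phi\psi)(s)\bigr)\,ds
\]
together with the analogous equation for $\phi_\pm$, where $\phi\psi=\sum_{\pm_1,\pm_2}\phi_{\pm_1}\psi_{\pm_2}$ and $\psi^\dag\gamma^0\psi=\sum_{\pm_1,\pm_2}\psi_{\pm_1}^\dag\gamma^0\psi_{\pm_2}$.

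Next I would fix resolution spaces $\mathcal F_\psi$ (built on the flows $e^{\mp it\langle D\rangle_M}$ at regularity $H^\epsilon$) and $\mathcal F_\phi$ (built on $e^{\mp it\langle D\rangle_m}$ at regularity $H^{\frac12+\epsilon}$), of $U^p$-$V^p$ type adapted to these evolutions (simpler than those of \cite{BH}), so that membership controls $C(\R;H^s)$ and so that for Duhamel terms $\int_0^t e^{\mp i(t-s)\langle D\rangle}F(s)\,ds$ the profile obtained by stripping off the linear flow has strong $H^s$-limits as $t\to\pm\infty$; let $\mathcal N_\psi,\mathcal N_\phi$ denote the companion spaces measuring the nonlinearities, so that the Duhamel operator maps $\mathcal N\to\mathcal F$ by the standard energy estimate. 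Since the homogeneous terms are controlled by the data norms by construction, global well-posedness for small data and scattering both reduce, via the contraction mapping principle, to the bilinear estimates
\[
\bigl\|\phi_{\pm_1}\psi_{\pm_2}\bigr\|_{\mathcal N_\psi}\lesssim\|\phi_{\pm_1}\|_{\mathcal F_\phi}\|\psi_{\pm_2}\|_{\mathcal F_\psi},\qquad
\bigl\|\langle D\rangle_m^{-1}\bigl(\psi_{\pm_1}^\dag\gamma^0\psi_{\pm_2}\bigr)\bigr\|_{\mathcal N_\phi}\lesssim\|\psi_{\pm_1}\|_{\mathcal F_\psi}\|\psi_{\pm_2}\|_{\mathcal F_\psi},
\]
uniformly in all sign choices (the output sign being encoded in $\mathcal N_\psi,\mathcal N_\phi$).

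To prove these I would localize both inputs and the output dyadically in frequency and in modulation (distance to the relevant hyperboloid) and run through the interaction geometries high$\times$high$\to$low, high$\times$low$\to$high, low$\times$low$\to$low, and so on. The ingredients are: (i) the spinorial null structure of D'Ancona-Foschi-Selberg \cite{DaFoSe}, i.e.\ the symbol bound $|\Pi_{\pm_1}(\xi)\gamma^0\Pi_{\pm_2}(\eta)|\lesssim\angle(\pm_1\xi,\pm_2\eta)$ up to lower-order mass terms, an honest null form that kills the collinear configurations; (ii) bilinear $L^2_{t,x}$ estimates for transversal half-Klein-Gordon waves (of Strichartz/restriction type), transferred to the resolution spaces by the standard atomic/transference principle; and (iii) a modulation analysis: where all three modulations are small the interaction lies in a neighbourhood of the resonant set $\{\,\pm_1\langle\xi\rangle_{m_1}\pm_2\langle\eta\rangle_{m_2}\mp_0\langle\xi+\eta\rangle_{m_0}=0\,\}$ and one gains from the size of the corresponding resonance function; the subcriticality $\epsilon>0$ supplies, in addition, a small power of the largest frequency for the dyadic summation.

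The crux -- and the sole place the hypothesis $2M>m$ is used -- is the estimate for $\psi^\dag\gamma^0\psi$ in the high$\times$high$\to$high regime in which the two Dirac inputs $\xi,\eta$ are nearly parallel (and hence so is the output $\xi+\eta$). There the null form (i) degenerates, so the estimate must be closed through modulation alone, governed by the resonance function $\langle\xi\rangle_M+\langle\eta\rangle_M-\langle\xi+\eta\rangle_m$. Over all splittings of a fixed output frequency $\zeta=\xi+\eta$ this function is minimized at $\xi=\eta=\zeta/2$, where it equals $\sqrt{4M^2+|\zeta|^2}-\sqrt{m^2+|\zeta|^2}$, which is $\gtrsim(2M-m)\langle\zeta\rangle^{-1}>0$ precisely because $2M>m$; if instead $2M\le m$ it vanishes for some collinear configuration, i.e.\ a genuine resonance appears and the small-data scattering mechanism breaks down (the same threshold governs the resonances of the $\phi\psi\to\psi$ interaction as well). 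Hence one of the three modulations is $\gtrsim(2M-m)\langle\zeta\rangle^{-1}$, and this gain -- although it decays in the frequency -- suffices, together with the bilinear $L^2$ estimates and the $\epsilon$ of subcritical room, to close the bound. I expect the bookkeeping in this near-resonant high-frequency region, together with the angular and modulation accounting in the remaining regimes, to be the main technical work. Once the two bilinear estimates are established, the contraction map yields a unique small global solution with continuous dependence on the data, and the $U^p$-$V^p$ structure of $\mathcal F_\psi,\mathcal F_\phi$ produces the strong limits of the profiles in $H^\epsilon$ resp.\ $H^{\frac12+\epsilon}\times H^{-\frac12+\epsilon}$ as $t\to\pm\infty$, i.e.\ scattering to free solutions, upon undoing the projector decomposition.
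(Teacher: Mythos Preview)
Your strategy is essentially the paper's: diagonalize via $\Pi_\pm$ and $\phi_\pm$, reduce both nonlinearities by duality to a single trilinear integral, and close by combining the D'Ancona--Foschi--Selberg null structure with a modulation analysis exploiting the non-resonance bound $|\la\xi\ra_M+\la\eta\ra_M-\la\xi+\eta\ra_m|\gtrsim_{m,M}\la\zeta\ra^{-1}$ under $2M>m$; scattering then comes from the $V^2$ structure. The one substantive difference is your ingredient (ii): you plan to use bilinear $L^2_{t,x}$ estimates for transversal Klein--Gordon waves, whereas the paper builds the resolution norm $S^\pm_k$ directly out of \emph{localized linear} Strichartz norms $L^3_tL^6_x$ and $L^6_tL^3_x$ square-summed over angular caps and frequency cubes (Lemma~\ref{lem:loc-str}), and closes the trilinear bound via H\"older $L^2\times L^3_tL^6_x\times L^6_tL^3_x$ after cap/cube orthogonality; the $U^2/V^2$ machinery enters only as a tool to prove the linear duality estimate (Lemma~\ref{lem:lin} and Remark~\ref{rmk:u2}), not as the resolution space itself. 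A second minor point of mechanics: the non-resonance is not used as a multiplicative gain in the way you describe, but rather to truncate the modulation sum from below at $j\succeq -\min(k,k_1,k_2)$ (Lemma~\ref{lem:mod}\,i)), leaving a sum of length $O(k_{\min})$ whose polynomial loss $\la k_{\min}\ra^3$ is absorbed by $\epsilon>0$. Your bilinear-$L^2$ route is closer in spirit to \cite{BH}; the paper's localized-Strichartz variant buys a structurally simpler space at the price of one new linear lemma.
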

We refer to Subsection \ref{subsect:proof} for more details.
Our result is at the same level of regularity as the one proved by
D'Ancona, Foschi and Selberg \cite{DaFoSe}.  Its strength lies in the
global in time and scattering parts. In terms of Sobolev regularity
it is slightly more restrictive than Wang's result \cite{Wa}. However,
we do not assume additional angular regularity on the initial data,
cp.\ also Remark \ref{rmk:ang-reg}.

A key observation is that under the assumption $2M > m >0$ the system
\eqref{DKG} has no resonances. It was known from prior works on
Klein-Gordon type systems with multiple speeds that, under certain
conditions between the masses, resonant interactions do not occur and
the well-posedness theory improves. We refer the reader to the works
of Delort and Fang \cite{DeFa}, Schottdorf \cite{S12} and Germain
\cite{Ge} and to the references therein. We will use this, together
with some localized Strichartz estimates, to prove the key nonlinear
estimates.

Note that unlike many of the previous works which dealt with power
type nonlinearities for the Klein-Gordon equation, the
Dirac-Klein-Gordon system contains derivatives. This is not apparent
from our formulation of \eqref{DKG}; however if one wants to write
\eqref{DKG} as a system of Klein-Gordon equations, one should apply
$(-i \gamma^\mu \partial_\mu - M )$ to the first equation and then it
is obvious that the right hand side contains derivatives.

We conclude this section with an overview of the paper. In Section \ref{sect:red}
we introduce some of the basic notation and rewrite the original system \eqref{DKG} in 
the equivalent form \eqref{DKGf} which has two advantages: it is first order in time and it
unveils the null structure. The gains from the null structure are quantified in Subsection \ref{subsect:mod}
in a manner that fits our analysis. In Section \ref{sect:fs-linear} we define the resolution space in which we iterate
our system. Without getting into technical details at this point, there is one particular aspect of this section that deserves to be highlighted. 
Proving Strichartz estimates has become a standard type argument due to the Christ-Kiselev Lemma \cite{CK}. However, proving localized versions of the Strichartz estimates using Christ-Kiselev type arguments is not straightforward. In Section \ref{sect:fs-linear} we provide an alternative argument for establishing (localized) Strichartz estimates using $U^p,V^p$ spaces and we think that this part of the paper may be of independent interest. In Section \ref{sect:nl} we prove the trilinear estimates based on which we prove our main result in Theorem \ref{thm:main}.

\section{Reductions}\label{sect:red}

\subsection{Notation}\label{subsect:not}
We define $A\ls B$, if there is a harmless constant $c>0$ such that $A\leq c B$, and $A\gs B$ iff $B\ls A$. Further, we define $A\approx B$ iff both $A\ls B$ and $B\ls A$. Also, we define $A\ll B$ if the constant $c$ can be chosen such that $c<2^{-10}$. Also, $A\gg B$ iff $B\ll A$.

Similarly, we define $A\preceq B$ iff $2^A\ls 2^B$, $A\succeq B$ iff $2^A\gs 2^B$, $A \sim B$ iff $2^A\approx 2^B$, $A\prec B$ iff $2^A\ll 2^B$, $A\succ B$ iff $2^A\gg 2^B$.

Let $\rho^0\in C^\infty_c(-2,2)$ be a fixed smooth, even, cutoff
satisfying $\rho^0(s)=1$ for $|s|\leq 1$ and $0\leq \rho\leq 1$. For
$k \in \Z$ we define $\rho_k:\R^3 \to \R$,
$\rho_k(y):=\rho^0(2^{-k}|y|)-\rho^0(2^{-k+1}|y|)$, such that
$A_k:=\supp(\rho_k)\subset \{y \in \R^3 \colon 2^{k-1}\leq |y|\leq
2^{k+1}\}$. Let $\tilde{\rho}_k=\rho_{k-1}+\rho_k+\rho_{k+1}$ and
$\tilde{A}_k:=\supp(\tilde{\rho}_k)$. For $k \geq 1$, let $P_k$ be the
Fourier multiplication operators with respect to $\rho_k$, and
$P_0=I-\sum_{k \geq 1}P_k$.  For $j \in \Z$ we define
\[
\mathcal{F}[Q^{\pm,m}_{j}f](\tau,\xi)=\rho_j(\tau\pm \la\xi
\ra_m)\mathcal{F}f(\tau,\xi).
\]
Similarly, we define $\tilde{P}_k$ and $\tilde{Q}^{\pm,m}_{j}$.

We also define $P_{\leq k}=\sum_{0\leq k'\leq k }P_{k'}$, $P_{\prec
  k}=\sum_{0\leq k'\prec k }P_{k'}$, $P_{> k}=I-P_{\leq k}$,
$P_{\succeq k}=I-P_{\prec k}$, and similarly $Q^{\pm,m}_{\leq j}$,
$Q^{\pm,m}_{\prec j}$, $Q^{\pm,m}_{\succeq j}$, and $Q^{\pm,m}_{j \in J}$ for an interval $J$.
In the obvious way we also define the analogous operators based on $\tilde{P}_k$ and $\tilde{Q}^{\pm,m}_{j}$.

In the case $m=1$ we suppress the superscripts, e.g.\
$Q^{\pm,1}_{j}=Q^{\pm}_{j}$.

Further, for $l \in \N$ let $\mathcal{K}_{l}$ denote a set of
spherical caps of radius $2^{-l}$ which is a covering of $\S^2$ with
finite overlap. For a cap $\kappa \in \mathcal{K}_{l}$ we denote its
center in $\S^2$ by $\omega(\kappa)$. Let $\Gamma_\kappa$ be the cone
generated by $\kappa \in \mathcal{K}_{l}$ and $(\eta_\kappa)_{\kappa
  \in \mathcal{K}_l}$ be a smooth partition of unity subordinate to
$(\Gamma_\kappa)_{\mathcal{K}_{l}}$. Let $P_{\kappa}$ denote the
Fourier-muliplication operator with symbol $\eta_\kappa$, such that
$I=\sum_{\kappa \in \mathcal{K}_l}P_{\kappa}$. Further, let
$\tilde{P}_{\kappa}$ with doubled support such that
$P_{\kappa}=\tilde{P}_{\kappa}P_{\kappa}=P_{\kappa}\tilde{P}_{\kappa}$.
For notational convenience, we also define $\mathcal{K}_{0}=\{\S^2\}$
and $P_\kappa=I$ if $\kappa \in \mathcal{K}_{0}$.
\subsection{Setup of the system and null
  structure}\label{subsect:setup-null}

As written in \eqref{DKG} the cubic Dirac-Klein-Gordon system has a
linear part whose coefficients are matrices and it is technically
easier to work with scalar equations.  To do so, we adapt the setup
introduced in \cite[Section 2 and 3]{DaFoSe} to take into account the
mass terms, similarly to our prior work on the cubic Dirac equation
\cite{BH} (however, the sign convention is in accordance with
\cite{DaFoSe}). We repeat here the essential steps for convenience of
the reader. As highlighted in \cite{DaFoSe} the new setup is able to
identify a null-structure in the nonlinearity, although the presence
of mass terms alters the effectiveness of this structure at very small
scales.

For $j=1,2,3$ the matrices
$
\alpha^j:=\gamma^0\gamma^j, \; \beta:=\gamma^0
$
have the properties
\[
\alpha^j\beta+\beta\alpha^j=0, \;\alpha^j \alpha^k+\alpha^k\alpha^j=2\delta^{jk}I_4,
\]
see \cite[p.~878]{DaFoSe} for more details.

We introduce the Fourier multiplication operators $\Pi_{\pm}^M(D)$
with symbol
\[
\Pi^M_\pm(\xi)=\frac12 [I \pm \frac{1}{\la \xi \ra_M} ( \xi \cdot
\alpha + M \beta)]
\]
In the case $M=1$ we suppress the superscript, i.e.\
$\Pi_{\pm}(D)=\Pi_{\pm}^1(D)$.

We then define $\psi_\pm=\Pi_\pm^M(D) \psi$ and split $\psi=\psi_+ +
\psi_-$. Also, define $\la D \ra=\sqrt{1-\Delta}$. By applying the operators $\Pi_\pm^M(D)$ to the system
\eqref{DKG} we obtain the following system of equations:
\begin{equation} \label{DKGnew} \left\{
    \begin{aligned} 
      & (-i\partial_t + \langle D \rangle_M) \psi_+ = \Pi^M_+(D) (\phi \beta \psi) \\
      & (-i\partial_t - \langle D \rangle_M) \psi_- = \Pi^M_-(D) ( \phi \beta \psi)  \\
      & (\Box + m^2) \phi = \la \psi, \beta \psi \ra.
    \end{aligned}
  \right.
\end{equation}
In order to have a fully first order system, we define
$\phi_\pm=\phi\pm i \la D \ra_m^{-1} \partial_t \phi$ thus
\[
(-i \partial_t + \la D \ra_m) \phi_+ = \la D \ra_m^{-1} \la \psi,
\beta \psi \ra.
\]
Note that $\phi=\Re \phi_\pm$ and $\phi_-=\overline{\phi_+}$ since
$\phi$ is real-valued.  The system which we will study is
\begin{equation} \label{DKGf} \left\{
    \begin{aligned} 
      & (-i\partial_t + \langle D \rangle_M) \psi_+ = \Pi^M_+(D) (\Re \phi_+ \beta \psi) \\
      & (-i\partial_t - \langle D \rangle_M) \psi_- = \Pi^M_-(D) ( \Re \phi_+ \beta \psi)  \\
      & (-i \partial_t +\la D \ra_m) \phi_+ = \la D \ra_m^{-1} \la
      \psi, \beta \psi \ra.
    \end{aligned}
  \right.
\end{equation}
We aim to provide a global theory for this system for initial data
$(\psi_{\pm,0}, \phi_{+,0}) \in H^\epsilon \times H^{\frac12 +
  \epsilon}$.  It is an easy exercise that this translates back into a
global theory for the original system with $(\psi_0,\phi_0,\phi_1) \in
H^\epsilon \times H^{\frac12 + \epsilon} \times H^{-\frac12 +
  \epsilon}$.

There is a null structure in the system \eqref{DKGf}, which we
describe next.  This is again inspired by the work in \cite{DaFoSe}
and was adapted to the current setup in \cite{BH}. For more details,
we refer to the reader to \cite{DaFoSe,BH}.

We decompose $\la \psi, \beta \psi \ra$ as
\[
\begin{split}
  \la \psi, \beta \psi\ra & = \la \Pi^M_+(D) \psi_+, \beta \Pi^M_+(D) \psi_+ \ra + \la \Pi^M_-(D) \psi_-, \beta \Pi^M_-(D)) \psi_- \ra \\
  &\quad + \la \Pi^M_+(D) \psi_+, \beta \Pi^M_-(D) \psi_- \ra+ \la
  \Pi^M_-(D) \psi_-, \beta \Pi^M_+(D) \psi_+ \ra.
\end{split}
\]
We have
\begin{equation}\label{eq:com}
  \Pi^M_\pm(D)\beta=\beta\Pi^M_\mp(D)\pm M \la D\ra^{-1}_M \beta 
\end{equation}
The following Lemma, which corresponds to \cite[Lemma 3.1]{BH} and
\cite[Lemma 2]{DaFoSe}, analyses the symbols of the bilinear operators
above.
\begin{lem}\label{lem:PiPi} For fixed $M \geq 0$, the following holds true:
  \begin{equation} \label{PiPi}
    \begin{split}
      \Pi_\pm^M(\xi) \Pi_\mp^M(\eta) & = \mathcal{O}(\angle (\xi,\eta)) + \mathcal{O}(\la \xi \ra^{-1} + \la \eta \ra^{-1}) \\
      \Pi_\pm^M(\xi) \Pi_\pm^M(\eta) & = \mathcal{O}(\angle
      (-\xi,\eta)) + \mathcal{O}(\la \xi \ra^{-1} + \la \eta \ra^{-1})
    \end{split}
  \end{equation}
\end{lem}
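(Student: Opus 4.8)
The plan is to compute the two symbol products directly from the explicit formula $\Pi^M_\pm(\xi)=\frac12\bigl(I\pm\la\xi\ra_M^{-1}(\xi\cdot\alpha+M\beta)\bigr)$, using only the Clifford algebra relations $\alpha^j\alpha^k+\alpha^k\alpha^j=2\delta^{jk}I_4$ and $\alpha^j\beta+\beta\alpha^j=0$, $\beta^2=I_4$. Write $a(\xi)=\xi\cdot\alpha+M\beta$, so that $a(\xi)^2=(|\xi|^2+M^2)I_4=\la\xi\ra_M^2 I_4$ and $\Pi^M_\pm(\xi)=\frac12(I\pm\la\xi\ra_M^{-1}a(\xi))$. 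Then
\begin{equation}\label{eq:prod}
\Pi^M_s(\xi)\Pi^M_{s'}(\eta)=\frac14\Bigl(I+ss'\tfrac{a(\xi)a(\eta)}{\la\xi\ra_M\la\eta\ra_M}+s\tfrac{a(\xi)}{\la\xi\ra_M}+s'\tfrac{a(\eta)}{\la\eta\ra_M}\Bigr),\qquad s,s'\in\{+,-\}.
\end{equation}
The first key step is the pointwise identity $a(\xi)a(\eta)=(\xi\cdot\eta+M^2)I_4 + b(\xi,\eta)$, where $b(\xi,\eta):=\sum_{j<k}(\xi_j\eta_k-\xi_k\eta_j)[\alpha^j,\alpha^k]/2 + M\sum_j(\xi_j-\eta_j)\alpha^j\beta$ collects the antisymmetric part; this follows by expanding $a(\xi)a(\eta)$ and using the anticommutation relations. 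Note $\|b(\xi,\eta)\|\ls|\xi\wedge\eta|+M|\xi-\eta|$, and $|\xi\wedge\eta|=|\xi||\eta|\sin\angle(\xi,\eta)\ls\la\xi\ra\la\eta\ra\,\angle(\xi,\eta)$.

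For the mixed case $s'=-s$, the third and fourth terms in \eqref{eq:prod} combine to $s\la\xi\ra_M^{-1}a(\xi)-s\la\eta\ra_M^{-1}a(\eta)$, which is $s\,a(\xi)\bigl(\la\xi\ra_M^{-1}-\la\eta\ra_M^{-1}\bigr)+s\la\eta\ra_M^{-1}\bigl(a(\xi)-a(\eta)\bigr)$; the first piece is $\mathcal{O}(\la\xi\ra_M^{-1}+\la\eta\ra_M^{-1})$ using $|\la\xi\ra_M-\la\eta\ra_M|\le|\xi-\eta|$... actually more carefully $|\la\xi\ra_M^{-1}-\la\eta\ra_M^{-1}|=\tfrac{|\la\xi\ra_M-\la\eta\ra_M|}{\la\xi\ra_M\la\eta\ra_M}$, bounded by $\angle(\xi,\eta)\min(\la\xi\ra_M,\la\eta\ra_M)^{-1}$ plus a term $\ls(\la\xi\ra^{-1}+\la\eta\ra^{-1})$ coming from the difference of radial parts, and the second piece is literally $\mathcal{O}(1)\cdot(\la\xi\ra^{-1}+\la\eta\ra^{-1})$ only after one observes $a(\xi)-a(\eta)=(\xi-\eta)\cdot\alpha$, whose norm is comparable to $|\xi-\eta|$ — so to land the angle, one splits $|\xi-\eta|\le\bigl||\xi|-|\eta|\bigr|+\min(|\xi|,|\eta|)\angle(\xi,\eta)$. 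Meanwhile the second term $-s^2\la\xi\ra_M^{-1}\la\eta\ra_M^{-1}a(\xi)a(\eta)=-\la\xi\ra_M^{-1}\la\eta\ra_M^{-1}\bigl((\xi\cdot\eta+M^2)I_4+b(\xi,\eta)\bigr)$, and adding the leading $\frac14 I$ one gets $\frac14\bigl(1-\tfrac{\xi\cdot\eta+M^2}{\la\xi\ra_M\la\eta\ra_M}\bigr)I - \frac14\tfrac{b(\xi,\eta)}{\la\xi\ra_M\la\eta\ra_M}+(\text{lower order})$; the scalar coefficient is $\frac14(1-\cos\angle(\xi,\eta))$ up to an $\mathcal{O}(\la\xi\ra^{-1}+\la\eta\ra^{-1})$ error (since $\xi\cdot\eta/(|\xi||\eta|)=\cos\angle(\xi,\eta)$ and the $M^2$-corrections to $\la\cdot\ra_M$ versus $|\cdot|$ are of that size), hence $\mathcal{O}(\angle(\xi,\eta)^2)\subset\mathcal{O}(\angle(\xi,\eta))$, and the $b$-term is $\mathcal{O}(\angle(\xi,\eta))+\mathcal{O}(M(\la\xi\ra^{-1}+\la\eta\ra^{-1}))$. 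Collecting everything yields the first line of \eqref{PiPi}.

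For the same-sign case $s'=s$, apply \eqref{eq:com} — or equivalently just replace $\eta$ by $-\eta$ and track signs: $a(-\eta)=-\eta\cdot\alpha+M\beta$, so $\Pi^M_s(\xi)\Pi^M_s(\eta)$ relates to a mixed-type product with $\eta\rightsquigarrow-\eta$ except for the $\beta$-part. Concretely, $a(\xi)a(\eta)=(-\xi\cdot(-\eta)+M^2)I_4+b(\xi,\eta)$ and $\xi\cdot\eta=-\xi\cdot(-\eta)$, so $1+s^2\xi\cdot\eta/(\la\xi\ra_M\la\eta\ra_M)=1-\cos\angle(\xi,-\eta)+\mathcal{O}(\la\xi\ra^{-1}+\la\eta\ra^{-1})=\mathcal{O}(\angle(-\xi,\eta))+\mathcal{O}(\la\xi\ra^{-1}+\la\eta\ra^{-1})$ since $\angle(\xi,-\eta)=\angle(-\xi,\eta)$; the single-$a$ terms now add rather than subtract, but their sum is still $\mathcal{O}(\la\xi\ra^{-1}+\la\eta\ra^{-1})$... no — here is the subtlety: $s\la\xi\ra_M^{-1}a(\xi)+s\la\eta\ra_M^{-1}a(\eta)$ is not small in general. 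The resolution is that the full $4\times4$ product \eqref{eq:prod} must be recombined so that the two rank-related pieces cancel; the cleanest route is to factor $\Pi^M_s(\xi)\Pi^M_s(\eta)=\Pi^M_s(\xi)\bigl(\Pi^M_s(\eta)-\Pi^M_s(\xi)\bigr)+\Pi^M_s(\xi)$ and use $\Pi^M_s(\xi)^2=\Pi^M_s(\xi)$ together with $\Pi^M_s(\xi)\Pi^M_{-s}(\xi)=0$, writing $\Pi^M_s(\eta)=\Pi^M_s(\xi)+\bigl(\Pi^M_s(\eta)-\Pi^M_s(\xi)\bigr)$ and noting $\|\Pi^M_s(\eta)-\Pi^M_s(\xi)\|\ls\angle(\xi,\eta)+\la\xi\ra^{-1}+\la\eta\ra^{-1}$ — but this gives the angle $\angle(\xi,\eta)$, not $\angle(-\xi,\eta)$, so instead one should use $\Pi^M_s(\eta)=\Pi^M_{-s}(-\eta)\mp M\la D\ra_M^{-1}\beta$-type relation from \eqref{eq:com} to swap to the $-\eta$ variable, whence $\Pi^M_s(\xi)\Pi^M_{-s}(-\eta)$ is a mixed-type product with the angle $\angle(\xi,-\eta)=\angle(-\xi,\eta)$ and the correction is $\mathcal{O}(M\la\eta\ra_M^{-1})$. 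This swap is the only genuinely delicate point; once it is in place the second line of \eqref{PiPi} follows from the first.

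I expect the main obstacle to be precisely the bookkeeping in this last reduction: correctly using the commutator identity \eqref{eq:com} to convert the same-sign product into a mixed-sign product in the variable $-\eta$, while keeping every error term uniformly $\mathcal{O}(\la\xi\ra^{-1}+\la\eta\ra^{-1})$ (rather than, say, $\mathcal{O}(M^2\la\xi\ra^{-1})$ which would be unacceptable for large $M$ — though here $M$ is fixed so constants may depend on it). All other steps are elementary Clifford-algebra manipulations plus the two scalar inequalities $1-\cos\theta\ls\theta^2\ls\theta$ and $|\xi-\eta|\ls\bigl||\xi|-|\eta|\bigr|+\min(|\xi|,|\eta|)\,\angle(\xi,\eta)$, the latter ensuring that length-differences get absorbed into the $\la\cdot\ra^{-1}$ errors and only genuine angular separation survives in the $\mathcal{O}(\angle)$ term.
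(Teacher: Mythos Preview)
The paper does not prove this lemma; it simply cites \cite[Lemma~3.1]{BH} and \cite[Lemma~2]{DaFoSe}. Your direct computation from the symbol formula is the standard route and is essentially correct: the key steps are (a) the Clifford identity $a(\xi)a(\eta)=(\xi\cdot\eta+M^2)I_4+b(\xi,\eta)$ with $\|b(\xi,\eta)\|\ls|\xi\wedge\eta|+M|\xi-\eta|$, (b) the bound $\bigl|1-\tfrac{\xi\cdot\eta+M^2}{\la\xi\ra_M\la\eta\ra_M}\bigr|\ls\angle(\xi,\eta)^2+\la\xi\ra^{-1}+\la\eta\ra^{-1}$, and (c) the reduction of the same-sign product to the mixed-sign one via the identity $\Pi^M_s(\eta)=\Pi^M_{-s}(-\eta)+sM\la\eta\ra_M^{-1}\beta$.

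Two points of cleanup. First, your decomposition of the linear piece $s\la\xi\ra_M^{-1}a(\xi)-s\la\eta\ra_M^{-1}a(\eta)$ as $s\,a(\xi)\bigl(\la\xi\ra_M^{-1}-\la\eta\ra_M^{-1}\bigr)+s\la\eta\ra_M^{-1}\bigl(a(\xi)-a(\eta)\bigr)$ is awkward: when $|\xi|\gg|\eta|$ each summand individually has size $\approx|\xi|/\la\eta\ra_M$, and only their \emph{sum} is small. A cleaner decomposition separates angular and radial parts directly:
\[
\frac{\xi}{\la\xi\ra_M}-\frac{\eta}{\la\eta\ra_M}
=\frac{|\xi|}{\la\xi\ra_M}\Bigl(\frac{\xi}{|\xi|}-\frac{\eta}{|\eta|}\Bigr)
+\Bigl(\frac{|\xi|}{\la\xi\ra_M}-\frac{|\eta|}{\la\eta\ra_M}\Bigr)\frac{\eta}{|\eta|},
\]
where the first term is $\mathcal{O}(\angle(\xi,\eta))$ and the second is $\mathcal{O}(\la\xi\ra^{-2}+\la\eta\ra^{-2})$ since $1-|\zeta|/\la\zeta\ra_M=\mathcal{O}(\la\zeta\ra^{-2})$. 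The $\beta$-part $M(\la\xi\ra_M^{-1}-\la\eta\ra_M^{-1})$ is then trivially $\mathcal{O}(\la\xi\ra^{-1}+\la\eta\ra^{-1})$.

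Second, the identity you invoke for the same-sign reduction is \emph{not} \eqref{eq:com} (which is a commutation with $\beta$), but rather the elementary symbol identity $\Pi^M_s(\eta)=\Pi^M_{-s}(-\eta)+sM\la\eta\ra_M^{-1}\beta$, which follows immediately from the definition since $a(-\eta)=-\eta\cdot\alpha+M\beta$. With this correction your reduction goes through: $\Pi^M_s(\xi)\Pi^M_s(\eta)=\Pi^M_s(\xi)\Pi^M_{-s}(-\eta)+\mathcal{O}(M\la\eta\ra_M^{-1})$, and the first line of \eqref{PiPi} applied with $\eta\mapsto-\eta$ gives the second line.
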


We now explain heuristically why this is useful here, see Lemma
\ref{lem:stable} for the technical result which will be used in the
nonlinear analysis. By \eqref{eq:com} it follows that for $s_1,s_2 \in \{+,-\}$
\begin{align*}
  \mathcal{F}_x\la \Pi_{s_1} \psi_1,\beta\Pi_{s_2} \psi_2\ra(\xi)&=\int\limits_{\xi=\xi_1-\xi_2}\la \Pi_{s_1}(\xi_1) \widehat{\psi_1}(\xi_1),\beta\Pi_{s_2}(\xi_2) \widehat{\psi_2}(\xi_2)\ra d\xi_1d\xi_2\\
  =&\int\limits_{\xi=\xi_1-\xi_2}\la \beta\Pi_{-s_2}(\xi_2)\Pi_{s_1}(\xi_1) \widehat{\psi_1}(\xi_1),\widehat{\psi_2}(\xi_2)\ra d\xi_1d\xi_2\\
  &+s_2M\int\limits_{\xi=\xi_1-\xi_2}
  \la\xi_1\ra_M^{-1}\la\beta\Pi_{s_1}(\xi_1) \widehat{\psi_1}(\xi_1),\widehat{\psi_2}(\xi_2)\ra
  d\xi_1d\xi_2.
\end{align*}
Hence, smallness of the angle $\angle(s_1\xi_1,s_2\xi_2)$ can be
exploited as long as it exceeds
$\max(\la\xi_1\ra_M^{-1},\la\xi_2\ra_M^{-1})$.  See
\cite[p.~885]{DaFoSe} for the analogue of this in the massless case,
where we have $\Pi_-^0(\xi_1) \Pi_+^0(\xi_2) =0$ if $\angle
(\xi_1,\xi_2)=0$, which makes the null structure effective at all
angular scales. In the massive case $M>0$ the null-structure does not
bring gains beyond $\max(\la\xi_1\ra_M^{-1},\la\xi_2\ra_M^{-1})$.  To
compensate for this we need to use that there are no resonances
present in \eqref{DKGf}.

In fact, as observed in \cite{DaFoSe}, there is a second and similar
null-structure in the nonlinearities present in the equations for
$\psi_\pm$ which will be exploited by duality in Section
\ref{sect:nl}.

\subsection{Modulation analysis}\label{subsect:mod}
A key aspect in the nonlinear analysis is the lack of resonant
terms. Arguments of similar nature are contained in \cite[Lemma
2]{S12}, see also \cite{DeFa,Ge}. Additionally, we will prove that
smallness of the maximal modulation induces angular constraints. In
the context of the cubic Dirac equation a similar result is contained
in \cite[Lemma 6.5]{BH}. We first provide lower bounds for the
resonance function.
\begin{lem}\label{lem:res}
  Fix $0<m<2M$. For $s_1,s_2\in \{+,-\}$ define the resonance function
  \begin{equation}\label{eq:res-fct}
    \mu^{s_1,s_2}(\xi_1,\xi_2):=\la\xi_1-\xi_2\ra_m+s_1\la \xi_1\ra_M-s_2\la \xi_2\ra_M.
  \end{equation}
  Then, we have the following bounds:

  {\it Case 1:} If
  \begin{enumerate} \item[a)]$s_1=+, s_2=-$ or
  \item[b)] $s_1=-,s_2=+$ and $\la\xi_1-\xi_2\ra_m\ll \min(\la
    \xi_1\ra_M, \la \xi_2\ra_M)$,
  \end{enumerate}
  then
  \begin{equation}\label{eq:high-mod}
    |\mu^{s_1,s_2}(\xi_1,\xi_2)|\gs \max(\la \xi_1-\xi_2\ra,\la \xi_1\ra,\la \xi_2\ra)
  \end{equation}

  {\it Case 2:} If \begin{enumerate} \item[a)] $s_1=s_2$ or
  \item[b)] $s_1=-,s_2=+$ and $\la\xi_1-\xi_2\ra_m\gs \min(\la
    \xi_1\ra_M, \la \xi_2\ra_M)$,
  \end{enumerate}
  then
  \begin{equation}\label{eq:mod-angle}
    \begin{split}
      |\mu^{s_1,s_2}(\xi_1,\xi_2)|\gs_{m,M} &\frac{\la \xi_1\ra \cdot
        \la \xi_2\ra}{\la \xi_1-\xi_2\ra}\angle(s_1\xi_1,s_2\xi_2)^2
    \end{split}
  \end{equation}
  With any choice of signs, we have both
  \begin{equation}\label{eq:gen-lb}
    |\mu^{s_1,s_2}(\xi_1,\xi_2)|\gs_{m,M}\min(\la \xi_1\ra , \la \xi_2\ra )\angle(s_1\xi_1,s_2\xi_2)^2,
  \end{equation}
  and the \emph{non-resonance} bound
  \begin{equation}\label{eq:non-res}
    |\mu^{s_1,s_2}(\xi_1,\xi_2)|\gs_{m,M} \max(\la \xi_1-\xi_2\ra^{-1},\la \xi_1\ra^{-1},\la \xi_2\ra^{-1}).
  \end{equation}
\end{lem}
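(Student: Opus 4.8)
The plan is to reduce the whole statement to the behaviour of the single algebraic quantity
\[
 N:=\la\xi_1-\xi_2\ra_m^2-(s_1\la\xi_1\ra_M-s_2\la\xi_2\ra_M)^2
  =m^2-2M^2-2\,\xi_1\cdot\xi_2+2s_1s_2\la\xi_1\ra_M\la\xi_2\ra_M,
\]
using throughout the elementary facts $\la\cdot\ra_\nu\approx_\nu\la\cdot\ra$ (for fixed $\nu>0$), $|\la\xi_1\ra_M-\la\xi_2\ra_M|\le|\xi_1-\xi_2|\le\la\xi_1-\xi_2\ra_m$, $\sin\tfrac\al2\approx\al$ on $[0,\pi]$, and --- the key one --- the identity $\la\xi_1\ra_M\la\xi_2\ra_M-|\xi_1|\,|\xi_2|=\tfrac{M^2(M^2+|\xi_1|^2+|\xi_2|^2)}{\la\xi_1\ra_M\la\xi_2\ra_M+|\xi_1|\,|\xi_2|}$, with its two consequences $\la\xi_1\ra_M\la\xi_2\ra_M-|\xi_1|\,|\xi_2|\ge M^2$ (equivalent after squaring to $(|\xi_1|-|\xi_2|)^2(M^2+|\xi_1|^2+|\xi_2|^2)\ge0$) and $\la\xi_1\ra_M\la\xi_2\ra_M-|\xi_1|\,|\xi_2|\ge\la\xi_j\ra_M(\la\xi_i\ra_M-|\xi_i|)=\tfrac{M^2\la\xi_j\ra_M}{\la\xi_i\ra_M+|\xi_i|}$ for either labelling $\{i,j\}=\{1,2\}$. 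One also uses the sign-symmetry $(\xi_1,s_1,\xi_2,s_2)\mapsto(\xi_2,-s_2,\xi_1,-s_1)$, which leaves $\mu^{s_1,s_2}$ and $\angle(s_1\xi_1,s_2\xi_2)$ invariant, to normalise the signs, together with the elementary bound $|\mu^{s_1,s_2}|\ge|N|\,/\,(\la\xi_1-\xi_2\ra_m+|s_1\la\xi_1\ra_M-s_2\la\xi_2\ra_M|)$.

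\emph{Step 1 (the resonance identity).} Writing $\xi_1\cdot\xi_2$ in terms of $\theta:=\angle(s_1\xi_1,s_2\xi_2)$ and using the key identity: if $s_1=s_2$ then $N=(m^2-2M^2)+2(\la\xi_1\ra_M\la\xi_2\ra_M-|\xi_1|\,|\xi_2|)+4|\xi_1|\,|\xi_2|\sin^2\tfrac\theta2\ge m^2+4|\xi_1|\,|\xi_2|\sin^2\tfrac\theta2>0$; and if $s_1=-s_2$, the same computation gives $-N=(2M^2-m^2)+2(\la\xi_1\ra_M\la\xi_2\ra_M-|\xi_1|\,|\xi_2|)+4|\xi_1|\,|\xi_2|\sin^2\tfrac\theta2\ge(4M^2-m^2)+4|\xi_1|\,|\xi_2|\sin^2\tfrac\theta2>0$. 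This is exactly where the hypothesis $2M>m$ enters; it is the analytic form of the absence of resonances, and in particular it forces $\mu^{s_1,s_2}\ne0$. One also records that $-N\gs_{m,M}\la\xi_1\ra_M\la\xi_2\ra_M-|\xi_1|\,|\xi_2|$ in the second case (because $\la\xi_1\ra_M\la\xi_2\ra_M-|\xi_1||\xi_2|\ge M^2$ and $4M^2-m^2>0$).

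\emph{Step 2 (mixed signs).} For $s_1=+,s_2=-$ (Case 1a) the three summands of $\mu^{+,-}=\la\xi_1-\xi_2\ra_m+\la\xi_1\ra_M+\la\xi_2\ra_M$ are positive and $\gs_{m,M}\la\xi_1-\xi_2\ra,\la\xi_1\ra,\la\xi_2\ra$ respectively, so \eqref{eq:high-mod} holds and \eqref{eq:gen-lb}, \eqref{eq:non-res} follow since their right-hand sides are $\ls\max(\la\xi_1-\xi_2\ra,\la\xi_1\ra,\la\xi_2\ra)$. For $s_1=-,s_2=+$ Step 1 gives $\mu^{-,+}<0$ and $|\mu^{-,+}|=\la\xi_1\ra_M+\la\xi_2\ra_M-\la\xi_1-\xi_2\ra_m=(-N)\,/\,(\la\xi_1\ra_M+\la\xi_2\ra_M+\la\xi_1-\xi_2\ra_m)$; under the Case 1b hypothesis $\la\xi_1-\xi_2\ra_m\ll\min(\la\xi_1\ra_M,\la\xi_2\ra_M)$ the first expression is $\ge\tfrac12(\la\xi_1\ra_M+\la\xi_2\ra_M)$, again giving \eqref{eq:high-mod} and hence \eqref{eq:gen-lb}, \eqref{eq:non-res}. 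Case 2b is the delicate mixed-sign case: after normalising $|\xi_1|\le|\xi_2|$, its hypothesis $\la\xi_1-\xi_2\ra_m\gs\la\xi_1\ra_M$ together with $\la\xi_1-\xi_2\ra\le\la\xi_1\ra+\la\xi_2\ra$ forces $\la\xi_1-\xi_2\ra\approx_{m,M}\la\xi_2\ra\approx\max(\la\xi_1\ra,\la\xi_2\ra)$, so the denominator of $|\mu^{-,+}|$ is $\approx_{m,M}\max(\la\xi_1\ra,\la\xi_2\ra)$; combining this with $-N\ge(4M^2-m^2)+4|\xi_1|\,|\xi_2|\sin^2\tfrac\theta2$ yields \eqref{eq:mod-angle} and \eqref{eq:gen-lb} when $|\xi_1|,|\xi_2|\gs1$ (the residual range being trivial, the right-hand sides then being $\ls_{m,M}1$), while combining it with the bound $-N\gs_{m,M}\la\xi_1\ra_M\la\xi_2\ra_M-|\xi_1||\xi_2|\ge\tfrac{M^2\la\xi_2\ra_M}{\la\xi_1\ra_M+|\xi_1|}\gs_{m,M}\la\xi_2\ra/\la\xi_1\ra$ yields \eqref{eq:non-res}, since $\la\xi_1\ra^{-1}$ is the largest of the three reciprocals here.

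\emph{Step 3 (equal signs, and the main obstacle).} By the symmetry we may take $s_1=s_2=+$, so $\mu^{+,+}=\la\xi_1-\xi_2\ra_m+\eps|\la\xi_1\ra_M-\la\xi_2\ra_M|$ with $\eps=+$ if $|\xi_1|\ge|\xi_2|$ and $\eps=-$ otherwise; by Step 1, $\mu^{+,+}>0$. If $\eps=+$ then $\mu^{+,+}\ge\la\xi_1-\xi_2\ra_m\gs_m\la\xi_1-\xi_2\ra$ and $\mu^{+,+}\ge2\sqrt{|\xi_1|\,|\xi_2|}\,\sin\tfrac\theta2$, hence $\mu^{+,+}\la\xi_1-\xi_2\ra\ge4|\xi_1|\,|\xi_2|\sin^2\tfrac\theta2$. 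If $\eps=-$ then $\mu^{+,+}\ge N\,/\,(\la\xi_1-\xi_2\ra_m+|\la\xi_1\ra_M-\la\xi_2\ra_M|)\gs_m(m^2+|\xi_1|\,|\xi_2|\theta^2)/\la\xi_1-\xi_2\ra$. In either case, once $|\xi_1|,|\xi_2|\gs1$ one replaces $|\xi_1|\,|\xi_2|$ by $\la\xi_1\ra\la\xi_2\ra$ and reads off \eqref{eq:mod-angle}, whence \eqref{eq:gen-lb} via $\la\xi_1\ra\la\xi_2\ra/\la\xi_1-\xi_2\ra\gs\min(\la\xi_1\ra,\la\xi_2\ra)$ and \eqref{eq:non-res} from $\mu^{+,+}\gs_m\la\xi_1-\xi_2\ra^{-1}$ (and, when $\eps=+$, from $\mu^{+,+}\ge m$). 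The one genuinely delicate point --- the main obstacle --- is the regime $\min(|\xi_1|,|\xi_2|)\ls1$, where the bound $\mu^{+,+}\gs(m^2+|\xi_1|\,|\xi_2|\theta^2)/\la\xi_1-\xi_2\ra$ loses a power of the large frequency. There one uses instead the refinement $N\ge(m^2-2M^2)+\tfrac{2M^2\la\xi_j\ra_M}{\la\xi_i\ra_M+|\xi_i|}$ with $i$ the index of the small frequency: since $\la\xi_i\ra_M+|\xi_i|\ls_M1$ and $N\ge m^2>0$ one concludes $N\gs_{m,M}\la\xi_j\ra\approx\la\xi_1-\xi_2\ra$, whence (the denominator being $\ls_m\la\xi_1-\xi_2\ra$) $\mu^{+,+}\gs_{m,M}1$, which dominates the right-hand sides of \eqref{eq:mod-angle}--\eqref{eq:non-res} because these are all $\ls_{m,M}1$ when $\min(|\xi_1|,|\xi_2|)\ls1$. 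Using the same refinement with only $\la\xi_i\ra_M+|\xi_i|\ls_M\la\xi_i\ra$ yields $\mu^{+,+}\gs_{m,M}\la\xi_i\ra^{-1}$ and settles the last sub-case of \eqref{eq:non-res}, in which some $\la\xi_i\ra\ll\la\xi_1-\xi_2\ra$ is the smallest of the three quantities (the case with $\la\xi_2\ra$ smallest reducing to $\la\xi_1\ra$ smallest via the symmetry, and the case with $\la\xi_1-\xi_2\ra$ smallest being already handled); the low-frequency endgame for Case 2b is entirely analogous.
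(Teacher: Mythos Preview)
Your proof is correct and follows essentially the same route as the paper: both arguments factor $|\mu^{s_1,s_2}|$ through the difference-of-squares quantity $N=\la\xi_1-\xi_2\ra_m^2-(s_1\la\xi_1\ra_M-s_2\la\xi_2\ra_M)^2$ and then exploit an algebraic identity for $\la\xi_1\ra_M\la\xi_2\ra_M-|\xi_1||\xi_2|$. The only cosmetic differences are that the paper uses the variant identity $\la\xi_1\ra_M\la\xi_2\ra_M-|\xi_1||\xi_2|-M^2=\tfrac{M^2(|\xi_1|-|\xi_2|)^2}{\la\xi_1\ra_M\la\xi_2\ra_M+|\xi_1||\xi_2|+M^2}$ in place of your refinement $\la\xi_1\ra_M\la\xi_2\ra_M-|\xi_1||\xi_2|\ge\tfrac{M^2\la\xi_j\ra_M}{\la\xi_i\ra_M+|\xi_i|}$, and it does not introduce $N$ or the sign symmetry explicitly, instead writing out Cases~2a and~2b separately; your presentation is a bit more unified and is more explicit about the low-frequency endgame.
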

\begin{proof}
  In Case 1 the lower bound \eqref{eq:high-mod} is obvious, which
  implies all other claims.

  Suppose now that we are in Case 2 a):
  \begin{align*}
    &(\la\xi_1-\xi_2\ra_m-|\la \xi_1\ra_M-\la \xi_2\ra_M|)(\la\xi_1-\xi_2\ra_m+|\la \xi_1\ra_M-\la \xi_2\ra_M|)\\
    &=2(|\xi_1||\xi_2|-\xi_1\cdot\xi_2)+m^2+2(\la\xi_1\ra_M\la\xi_2\ra_M-|\xi_1||\xi_2|-M^2)
  \end{align*}
  Now, we compute
  \begin{equation}\label{eq:d}\la\xi_1\ra_M\la\xi_2\ra_M-(|\xi_1||\xi_2|+M^2)=M^2\frac{(|\xi_1|-|\xi_2|)^2}{\la\xi_1\ra_M\la\xi_2\ra_M+|\xi_1||\xi_2|+M^2}
  \end{equation}
  Since this is non-negative, we conclude
  \begin{align*}
    &(\la\xi_1-\xi_2\ra_m-|\la \xi_1\ra_M-\la \xi_2\ra_M|)(\la\xi_1-\xi_2\ra_m+|\la \xi_1\ra_M-\la \xi_2\ra_M|)\\
    &\geq 2|\xi_1||\xi_2|(1-\cos\angle(\xi_1,\xi_2))+m^2\\
    &\gs |\xi_1||\xi_2|\angle(\xi_1,\xi_2)^2+m^2
  \end{align*}
  Now, because of $m>0$ and $\la\xi_1-\xi_2\ra_m+|\la \xi_1\ra_M-\la
  \xi_2\ra_M|\ls \la\xi_1-\xi_2\ra_m$ the estimates \eqref{eq:gen-lb}
  and \eqref{eq:mod-angle} follow. Also, \eqref{eq:non-res} follows if
  $\la\xi_1-\xi_2\ra\ls \min(\la \xi_1\ra,\la \xi_2\ra)$. Otherwise,
  we have $\max(\la \xi_1\ra,\la \xi_2\ra)\gg\min(\la \xi_1\ra,\la
  \xi_2\ra)$, and the estimate \eqref{eq:non-res} follows from
  \begin{align*}
    &(\la\xi_1-\xi_2\ra_m-|\la \xi_1\ra_M-\la \xi_2\ra_M|)(\la\xi_1-\xi_2\ra_m+|\la \xi_1\ra_M-\la \xi_2\ra_M|)\\
    &\geq
    M^2\frac{(|\xi_1|-|\xi_2|)^2}{\la\xi_1\ra_M\la\xi_2\ra_M+|\xi_1||\xi_2|+M^2},
  \end{align*}
  where we used \eqref{eq:d} again.

  Suppose now that we are in Case 2 b): A computation similar to the
  above yields
  \begin{align*}
    &(\la \xi_1\ra_M+\la \xi_2\ra_M-\la\xi_1-\xi_2\ra_m)(\la \xi_1\ra_M+\la \xi_2\ra_M+\la\xi_1-\xi_2\ra_m)\\
    &=2(|\xi_1||\xi_2|+\xi_1\cdot\xi_2)+2M^2-m^2+2(\la\xi_1\ra_M\la\xi_2\ra_M-|\xi_1||\xi_2|)\\
    &\gs |\xi_1||\xi_2|\angle(-\xi_1,\xi_2)^2+4M^2-m^2.
  \end{align*}
  By assumption $4M^2-m^2>0$, so the estimate \eqref{eq:gen-lb} is
  proved, and due to $\la\xi_1-\xi_2\ra\approx \max(\la \xi_1\ra, \la
  \xi_2\ra)$ the claim \eqref{eq:mod-angle} follows, too. Also, if $|
  \xi_1|\approx | \xi_2|$, \eqref{eq:non-res} follows. Otherwise, we use
  the lower bound provided by \eqref{eq:d} to obtain
  \eqref{eq:non-res}.
\end{proof}

\begin{rmk}\label{rmk:m}
From now on we fix $M=m=1$ in oder to simplify the exposition. In view
of Lemma \ref{lem:res} it will be obvious that all arguments carry
over to the case $2M>m>0$ with modified (implicit) constants depending
on $m,M$.
\end{rmk}

\begin{lem} \label{lem:mod} Let $s_1,s_2\in \{+,-\}$. Consider
  $k,k_1,k_2\in \N_0, j,j_1,j_2 \in \Z$, and $\phi=\tilde{P}_{k}
  \tilde{Q}^{+}_{j}\phi$, $u_i=\tilde{P}_{k_i}
  \tilde{Q}^{s_i}_{j_i}u_i$.

  i) If $\max(j,j_1,j_2) \prec -\min(k,k_1,k_2)$, we have
  \begin{equation}\label{eq:int-zero}
    \int_{\R^{1+3}}\phi \cdot u_1\overline{u_2} \, dtdx =0.
  \end{equation}

  ii) {\it Case 1:} Suppose that \begin{align*}& s_1=+,s_2=-\\
\text{ or }\qquad & s_1=-,s_2=+ \text{ and }k\prec \min(k_1,k_2).\end{align*}
If $\max(j,j_1,j_2)\prec \max(k,k_1,k_2)$,
  then, \eqref{eq:int-zero} holds true.

  {\it Case 2:} Suppose that \begin{align*}&s_1=s_2\\
\text{ or }\qquad & s_1=-,s_2=+ \text{ and }k\succeq
  \min(k_1,k_2).
\end{align*}
If $l\geq 1$, $\kappa_1,\kappa_2 \in
  \mathcal{K}_{l}$ with $\dist(s_1\kappa_1,s_2\kappa_2)\geq 2^{-l}$
  and $\max(j,j_1,j_2)\prec k_1+k_2-k-2l$, then
  \begin{equation}\label{eq:int-zero-caps}
    \int_{\R^{1+3}}\phi \cdot \tilde{P}_{\kappa_1} u_1\overline{\tilde{P}_{\kappa_2}u_2} \, dtdx =0.
  \end{equation}
\end{lem}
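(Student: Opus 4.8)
The strategy is the standard Fourier-support argument: the integral of a product $\phi\cdot u_1\overline{u_2}$ over $\R^{1+3}$ equals, by Plancherel, the convolution of the three space-time Fourier transforms evaluated at the origin, so it vanishes as soon as the three frequency supports cannot add up to zero. Concretely, write $(\tau,\xi)$, $(\tau_i,\xi_i)$ for the space-time frequencies of $\phi$, $u_i$; on the support of the integrand we have $\tau=\tau_1-\tau_2$ and $\xi=\xi_1-\xi_2$, together with the modulation localizations
\[
\tau+\la\xi\ra=O(2^j),\quad \tau_1+s_1\la\xi_1\ra=O(2^{j_1}),\quad \tau_2+s_2\la\xi_2\ra=O(2^{j_2}),
\]
and the spatial localizations $|\xi|\approx 2^k$, $|\xi_i|\approx 2^{k_i}$ (and, in the capped case, $\xi_i/|\xi_i|\in s_i\kappa_i$). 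Subtracting the first relation from the difference of the other two, the $\tau$'s cancel and one obtains
\[
\big|\,\la\xi_1-\xi_2\ra+s_1\la\xi_1\ra-s_2\la\xi_2\ra\,\big|=|\mu^{s_1,s_2}(\xi_1,\xi_2)|\ls 2^{\max(j,j_1,j_2)}.
\]
Thus the integral is forced to be zero whenever the right-hand side is strictly smaller than the lower bound for $|\mu^{s_1,s_2}|$ supplied by Lemma \ref{lem:res}; this is exactly the mechanism behind all three claims, and Remark \ref{rmk:m} lets us work with $M=m=1$.

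For part i), the relevant lower bound is the non-resonance estimate \eqref{eq:non-res}, which gives $|\mu^{s_1,s_2}(\xi_1,\xi_2)|\gs \max(\la\xi_1-\xi_2\ra^{-1},\la\xi_1\ra^{-1},\la\xi_2\ra^{-1})\approx 2^{-\min(k,k_1,k_2)}$; since $\max(j,j_1,j_2)\prec -\min(k,k_1,k_2)$ means $2^{\max(j,j_1,j_2)}\ll 2^{-\min(k,k_1,k_2)}$, the two bounds are incompatible and the integrand is supported on a null set. For part ii) Case 1, we are in Case 1 of Lemma \ref{lem:res} (either $s_1=+,s_2=-$, or $s_1=-,s_2=+$ with the low-output hypothesis $\la\xi_1-\xi_2\ra_m\ll\min(\la\xi_1\ra_M,\la\xi_2\ra_M)$, which on our supports reads $2^k\ll\min(2^{k_1},2^{k_2})$ — this is where $k\prec\min(k_1,k_2)$ is used): hence \eqref{eq:high-mod} gives $|\mu^{s_1,s_2}|\gs\max(\la\xi_1-\xi_2\ra,\la\xi_1\ra,\la\xi_2\ra)\approx 2^{\max(k,k_1,k_2)}$, which contradicts $2^{\max(j,j_1,j_2)}\ll 2^{\max(k,k_1,k_2)}$. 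For part ii) Case 2 we invoke \eqref{eq:mod-angle}: on the caps we have $\angle(s_1\xi_1,s_2\xi_2)\gs\dist(s_1\kappa_1,s_2\kappa_2)\geq 2^{-l}$, so
\[
|\mu^{s_1,s_2}(\xi_1,\xi_2)|\gs\frac{\la\xi_1\ra\la\xi_2\ra}{\la\xi_1-\xi_2\ra}\,2^{-2l}\approx 2^{k_1+k_2-k-2l},
\]
which is incompatible with $2^{\max(j,j_1,j_2)}\ll 2^{k_1+k_2-k-2l}$.

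**Anticipated obstacles.** The argument is essentially bookkeeping, so the only genuinely delicate points are (a) making sure the Case-1-versus-Case-2 dichotomy in this lemma lines up with the one in Lemma \ref{lem:res}: the hypothesis "$k\prec\min(k_1,k_2)$" must be translated into "$\la\xi_1-\xi_2\ra_m\ll\min(\la\xi_1\ra_M,\la\xi_2\ra_M)$", and conversely "$k\succeq\min(k_1,k_2)$" into the complementary condition, which is immediate once one notes $\la\xi\ra\approx 2^k$ etc. on these (fattened) Littlewood–Paley pieces — one should check the implicit constants in $\approx$ do not spoil the $\ll$, but this is exactly what the $\prec/\succeq$ notation with its $2^{-10}$ margin is designed to absorb; (b) in Case 2 using $\la\xi_1-\xi_2\ra\approx\max(\la\xi_1\ra,\la\xi_2\ra)$ to equate $\la\xi_1\ra\la\xi_2\ra/\la\xi_1-\xi_2\ra$ with $2^{k_1+k_2-k}$ — when $s_1=-,s_2=+$ and $k\succeq\min(k_1,k_2)$ one has $\la\xi_1-\xi_2\ra\approx\max(\la\xi_1\ra,\la\xi_2\ra)$ automatically (it is the "$+$ output" regime), and when $s_1=s_2$ one must instead observe that $\max(j,j_1,j_2)\prec k_1+k_2-k-2l$ together with $k\leq\min(k_1,k_2)+O(1)$ or $k\approx\max(k_1,k_2)$ still yields the needed comparison; in all sub-cases $2^{k_1+k_2-k}\gs\min(\la\xi_1\ra,\la\xi_2\ra)$, so \eqref{eq:gen-lb} is an equally valid (and cleaner) substitute for \eqref{eq:mod-angle} here. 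None of this is hard; the proof is short once the reductions to Lemma \ref{lem:res} are spelled out.
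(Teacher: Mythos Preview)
Your proposal is correct and follows essentially the same approach as the paper: rewrite the integral on the Fourier side to obtain the constraint \eqref{eq:up-res} on the resonance function, then derive a contradiction with the appropriate lower bound from Lemma~\ref{lem:res} (namely \eqref{eq:non-res} for part i), \eqref{eq:high-mod} for part ii) Case~1, and \eqref{eq:mod-angle} for part ii) Case~2). The only cosmetic discrepancy is a sign in your convolution bookkeeping (the paper gets $\zeta_\phi=\zeta_2-\zeta_1$, not $\zeta_1-\zeta_2$), which is irrelevant once absolute values are taken.
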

\begin{proof} We have
  \[
  \int_{\R^{1+3}}\phi \cdot u_1\overline{u_2} \, dtdx =
  \int_{\R^{1+3}}\widehat{\phi} \overline{
    \widehat{\overline{u_1}u_2}} \, d\tau d\xi
  \]
  and, with $\zeta=(\tau,\xi)$,
  \[
  \widehat{\overline{u_1}u_2}(\zeta)=\int
  \widehat{\overline{u_1}}(\zeta')\widehat{u_2}(\zeta-\zeta')d\zeta'
  =\int
  \overline{\widehat{u_1}}(-\zeta')\widehat{u_2}(\zeta-\zeta')d\zeta',
  \]
  hence, with $\zeta_j=(\tau_j,\xi_j)$,
  \begin{equation}\label{eq:ft}
    \int_{\R^{1+3}}\phi \cdot u_1\overline{u_2} \, dtdx
    =
    \int\int\widehat{\phi}(\zeta_2-\zeta_1)\widehat{u_1}(\zeta_1)\overline{\widehat{u_2}}(\zeta_2)d\zeta_1d\zeta_2
  \end{equation}
  The assumptions imply that we must have
  \[
  |\tau_2-\tau_1+\la\xi_2-\xi_1\ra|\approx 2^{j}, \;
  |\tau_1+s_1\la\xi_1\ra|\approx 2^{j_1}, \; |\tau_2+s_2\la\xi_2\ra|\approx
  2^{j_2}
  \]
  in order to obtain a nontrivial contribution. This implies
  \begin{equation}\label{eq:up-res}
    |\la\xi_2-\xi_1\ra+s_1\la\xi_1\ra-s_2\la\xi_2\ra|\ls
    2^{\max(j,j_1,j_2)}.
  \end{equation}

  i) By assumption we have $2^{\max(j,j_1,j_2)}\ll
  2^{-\min(k,k_1,k_2)}$, so that \eqref{eq:up-res} contradicts
  \eqref{eq:non-res}.

  ii) By assumption we have $2^{\max(j,j_1,j_2)}\ll
  2^{\max(k,k_1,k_2)}$ in Case 1, hence \eqref{eq:up-res} contradicts
  \eqref{eq:high-mod}. Similarly, in Case 2 the estimate \eqref{eq:up-res}
  contradicts \eqref{eq:mod-angle}.
\end{proof}

\section{Function spaces and linear estimates}\label{sect:fs-linear}
For $1\leq p\leq \infty$, $b \in \R$, we define
\[\|f\|_{\dot{X}^{\pm,b,p}}=\big\|\big(2^{bj}\|Q_{j}^\pm
f\|_{L^2}\big)_{j \in \Z }\big\|_{\ell^p},\]

The low frequency part will be treated altogether, that is we define
\[
\| f \|_{S^\pm_{\leq 0}} = \| f \|_{L^\infty_t L^2_x} + \| f \|_{L^2_t
  L^6_x} + \|f \|_{\dot{X}^{\pm,\frac12,\infty}}.
\]
By interpolation, the space above provides all the Strichartz
estimates for the Schr\"odinger equation on $\R^3$. This is
natural since the Klein-Gordon equation in low frequency behaves like the
Schr\"odinger equation.

In high frequency, the Klein-Gordon equation is of wave type and the
Strichartz estimates should reflect that.  Moreover we need some
refinement of the standard Strichartz estimates.

For $d=3$ and  $k\in \Z_+$ let $\Xi_k=2^k\cdot\Z^d$. Let
$\gamma^{(1)}:\mathbb{R}\to[0,1]$ denote an even smooth function
supported in the interval $[-2/3,2/3]$ with the property that
\begin{equation*}
  \sum_{n\in\Z}\gamma^{(1)}(\xi-n)= 1\text{ for }\xi \in \R.
\end{equation*}
Let $\gamma:\mathbb{R}^d\to[0,1]$,
$\gamma(\xi)=\gamma^{(1)}(\xi_1)\cdot\ldots\cdot
\gamma^{(1)}(\xi_d)$. For $k\in \Z_+$ and $n\in\Xi_k$ let
\begin{equation*}
  \gamma_{k,n}(\xi)=\gamma((\xi-n)/2^k).
\end{equation*}
Clearly, $\sum_{n\in\Xi_k}\gamma_{k,n}\equiv 1$ on $\R^d$. Now, we
define the Fourier-multiplication operators $\Gamma_{k,n}$ with symbol
$\gamma_{k,n}$.

There is the following refinement of the classical Strichartz
estimate. In the context of Strichartz-Pecher inequalities for the
wave equation, the underlying decay estimate after localization to
cubes has been proved in \cite[(A.59)]{KT99}, see also \cite[Theorem
4.1]{S08} for the case $p=q=4$.
\begin{lem}\label{lem:loc-str}
  Let $d=3$, $\frac{1}{p}+\frac{1}{q}=\frac{1}{2}$ with $p>2$. Then,
  \begin{equation}\label{eq:loc-str}
    \sup_{0 \leq k' \leq k} 2^{-\frac{k'+k}{p}} \left( \sum_{n \in
        \Xi_{k'}} \| \Gamma_{k',n} P_k e^{\pm it \la D \ra } f
      \|^2_{L^p_t L^q_x} \right)^{\frac12}\ls \|f\|_{L^2(\R^3)}
  \end{equation}
\end{lem}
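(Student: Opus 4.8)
The plan is a $TT^\ast$ argument, preceded by a reduction to a single Fourier cube; we may assume $2<p<\infty$, since the case $p=\infty,q=2$ is immediate from the unitarity of $e^{\pm it\la D\ra}$ on $L^2_x$ and Plancherel. Fix $k'\in\{0,\dots,k\}$ and, by symmetry under $t\mapsto-t$ and complex conjugation, the $+$ sign. Let $\tilde\Gamma_{k',n}$ be the Fourier multiplier whose symbol equals $1$ on $\supp\gamma_{k',n}$ and is supported in a fixed enlargement of that cube, the enlarged cubes still having bounded overlap, so that $\Gamma_{k',n}P_k=\Gamma_{k',n}P_k\tilde\Gamma_{k',n}$. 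It then suffices to prove the single-cube bound
\[
\|\Gamma_{k',n}P_k e^{it\la D\ra}g\|_{L^p_tL^q_x}\ls 2^{\frac{k'+k}{p}}\|g\|_{L^2}
\]
uniformly in $n\in\Xi_{k'}$: applying it with $g=\tilde\Gamma_{k',n}f$, squaring and summing in $n$, the left-hand side of \eqref{eq:loc-str} is $\ls 2^{\frac{k'+k}{p}}\bigl(\sum_n\|\tilde\Gamma_{k',n}f\|_{L^2}^2\bigr)^{1/2}\ls 2^{\frac{k'+k}{p}}\|f\|_{L^2}$ by Plancherel and bounded overlap, and the supremum over $k'$ is then harmless.

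For the single-cube bound, set $T=\Gamma_{k',n}P_k e^{it\la D\ra}$, so that $\|Tg\|_{L^\infty_tL^2_x}\ls\|g\|_{L^2}$ trivially and $TT^\ast F(t)=\int S(t-s)F(s)\,ds$ with $S(\tau)=\Gamma_{k',n}P_k e^{i\tau\la D\ra}\Gamma_{k',n}P_k$, i.e.\ Fourier multiplication by $\gamma_{k',n}^2\rho_k^2\,e^{i\tau\la\cdot\ra}$, a symbol supported in a cube of sidelength $\approx 2^{k'}$ at distance $\approx 2^{k}$ from the origin. The convolution kernel of $S(\tau)$ is exactly the oscillatory integral whose sup-norm is the localized decay estimate of \cite[(A.59)]{KT99}: there only the tangential curvature of the sphere $|\xi|\approx 2^k$ is exploited --- the Hessian of $\la\cdot\ra$ has eigenvalues $\approx 2^{-k}$ in the two tangential directions and $\approx 2^{-3k}$ radially --- and the argument given there for the wave phase transfers verbatim to the Klein--Gordon phase at frequency $2^{k}$. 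This yields
\[
\|S(\tau)\|_{L^1_x\to L^\infty_x}\ls 2^{3k'}\bigl(1+2^{2k'-k}|\tau|\bigr)^{-1}.
\]

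Interpolating with $\|S(\tau)\|_{L^2_x\to L^2_x}\ls 1$ gives, for $\sigma:=\tfrac2p=1-\tfrac2q\in(0,1)$, the bound $\|S(\tau)\|_{L^{q'}_x\to L^q_x}\ls G(\tau)$, where $G(\tau)=2^{3k'\sigma}$ for $|\tau|\le 2^{k-2k'}$ and $G(\tau)=2^{(k'+k)\sigma}|\tau|^{-\sigma}$ for $|\tau|>2^{k-2k'}$; hence $\|TT^\ast F(t)\|_{L^q_x}\ls\int G(t-s)\|F(s)\|_{L^{q'}_x}\,ds$. The part of $G$ supported in $|\tau|\le 2^{k-2k'}$ is a bounded kernel on an interval of length $\approx 2^{k-2k'}$, with $L^{p/2}_\tau$-norm $\approx 2^{3k'\sigma}2^{(k-2k')\sigma}=2^{(k'+k)\sigma}$, so Young's inequality (whose exponents balance, $\tfrac1{p/2}+\tfrac1{p'}=1+\tfrac1p$) contributes $\ls 2^{(k'+k)\sigma}\|F\|_{L^{p'}_tL^{q'}_x}$; for the tail we bound $G(\tau)\ls 2^{(k'+k)\sigma}|\tau|^{-\sigma}$ and invoke the one-dimensional Hardy--Littlewood--Sobolev inequality in time, valid since $\sigma\in(0,1)$, obtaining $\ls 2^{(k'+k)\sigma}\|F\|_{L^{p'}_tL^{q'}_x}$ as well. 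Thus $\|TT^\ast\|_{L^{p'}_tL^{q'}_x\to L^p_tL^q_x}\ls 2^{2(k'+k)/p}$, and so $\|T\|_{L^2\to L^p_tL^q_x}\ls 2^{(k'+k)/p}$, which is the single-cube bound. (If $k=0$ one uses instead the stronger Klein--Gordon/Schr\"odinger decay $\ls(1+|\tau|)^{-3/2}$ on a unit cube, and Young's inequality alone closes the estimate.)

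I expect the real difficulty to lie in the localized dispersive bound itself: one must verify that replacing $P_k$ by $\Gamma_{k',n}P_k$ with a $2^{k'}$-cube on the sphere $|\xi|\approx 2^k$ turns the usual three-dimensional wave decay $2^{3k}(2^{k}|\tau|)^{-1}$ into $2^{3k'}(2^{2k'-k}|\tau|)^{-1}$, reflecting that only two tangential directions of curvature remain available. As indicated, this is precisely \cite[(A.59)]{KT99}; granting it, the remaining steps are routine, the only mild subtlety being that the time-kernel $G$ does not lie in $L^{p/2}$ globally, so that its scale-invariant tail must be handled by Hardy--Littlewood--Sobolev rather than by Young's inequality.
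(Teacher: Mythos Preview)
Your proof is correct and follows essentially the same route as the paper: reduction by orthogonality to a single cube, then $TT^\ast$ combined with the localized dispersive bound $|K_{k',k;n}(t,x)|\ls 2^{3k'}(1+2^{2k'-k}|t|)^{-1}$, followed by interpolation and a fractional integration in time. The only cosmetic differences are that the paper supplies an explicit stationary-phase proof of the kernel bound (rather than simply citing \cite{KT99}) and handles the time convolution in one stroke via the weak-$L^{r}$ version of Hardy--Littlewood--Sobolev applied to $(1+2^{2k'-k}|t|)^{-2/p}$, whereas you split into a short-time piece (Young) and a scale-invariant tail (HLS); both yield the same factor $2^{2(k'+k)/p}$.
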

\begin{proof}
  By orthogonality, it suffices to prove
  \[
  \| \Gamma_{k',n} P_k e^{\pm it \la D \ra } f \|_{L^p_t L^q_x} \ls
  2^{\frac{k'+k}{p}}\|f\|_{L^2(\R^3)},
  \]
  uniformly in $n\in \Xi_{k'}$. Let $T=\Gamma_{k',n} P_k e^{\pm it \la
    D \ra }$. The operator $TT^\ast$ is a space-time convolution
  operator with the kernel
  \[
  K_{k',k;n}(t,x)=\int_{\R^3} e^{\pm it\la \xi \ra +ix\cdot
    \xi}\rho_k^2(\xi)\gamma_{k',n}^2(\xi)d\xi.
  \]
  By the $TT^\ast$-argument, it suffices to prove
  \[
  \|TT^\ast\|_{L^{p'}_t L^{q'}_x \to L^p_t L^q_x}\ls
  2^{\frac{2(k+k')}{p}}
  \]
  which reduces to proving the kernel bound
  \begin{equation}
    \label{eq:ker}
    |K_{k',k;n}(t,x)|\ls 2^{3k'} \big(1+2^{2k'-k}|t|\big)^{-1}.
  \end{equation}
  Indeed, by interpolation and Young's inequality, we obtain
  \[
  \|K_{k',k;n}(t,\cdot)\ast \phi \|_{L^q_x(\R^3)}\ls 2^{3k'
    (1-\frac{2}{q})}
  \big(1+2^{2k'-k}|t|\big)^{-(1-\frac{2}{q})}\|\phi\|_{L^{q'}_x(\R^3)},
  \]
  and Hardy-Littlewood-Sobolev with
  $\frac{1}{r}=\frac{2}{p}=1-\frac{2}{q}$ implies
  \begin{align*}
    \|TT^\ast\|_{L^{p'}_t L^{q'}_x \to L^p_t L^q_x} &\ls 2^{3k'
      (1-\frac{2}{q})}
    \|\big(1+2^{2k'-k}|t|\big)^{-(1-\frac{2}{q})}\|_{L^{r,w}_t}\\
    &\ls 2^{ \frac{2}{p}(k+k')}.
  \end{align*}
  Finally, we give a proof of \eqref{eq:ker}: Rescaling yields
  \[
  K_{k',k;n}(t,x)=2^{3k}K_{k'-k,1,2^{-k}n}(2^k t, 2^k x),
  \]
  where, for $\la \xi \ra_k:=(|\xi|^2+2^{-2k})^{\frac12}$,
  \[
  K_{j,1,a}(s,y)=\int_{\R^3} e^{\pm is\la \xi \ra_k +iy\cdot
    \xi}\rho_1^2(\xi)\gamma_{j,a}^2(\xi)d\xi
  \]
  For $|a|\approx 1$, we claim
  \begin{equation}\label{eq:red-ker}
    |K_{j,1,a}(s,y)|\ls 2^{3j}(1+2^{2j}|s|)^{-1}.
  \end{equation}
  For $|s|\leq 2^{-2j}$ this is immediate because the domain of
  integration has volume $2^{3j}$, and in the remaining case it can be
  proved as for the wave equation in \cite[(A.70)]{KT99}. We provide
  an explicit proof: By a simple covering argument we may replace
  $\rho_1^2\gamma_{j,a}^2$ by a smooth cutoff $\zeta$ with respect to
  a thickened spherical cap of size $2^j$ and denote the corresponding
  kernel by $\tilde{K}_{j,a}$. By rotation, we may assume that
  $y=(0,0,|y|)$. We use spherical coordinates:
  \[
  \tilde{K}_{j,a}(s,y)=\int_{0}^\infty \int_0^{2\pi} \int_0^\pi
  e^{i(|y|\rho \cos\theta+s\la \rho \ra_k) } \zeta(\theta,\varphi,\rho)
  \sin(\theta) \rho^2 d\theta d\varphi d\rho.
  \]
  We may choose
  $\zeta(\varphi,\theta,\rho)=\zeta_1(\theta)\zeta_2(\varphi)\zeta_3(\rho)$. The
  phase of the oscillatory integral is stationary only if $|y|\approx
  |s|$ and the cap is centered near the north pole or south pole,
  otherwise we get arbitrarily fast decay. We discuss only the first
  case, where we may further assume that $|\zeta_1' |\ls 2^{-j}$,
  $\zeta_1$ is supported in an interval of length $\ls 2^j$ in
  $[0,\pi)$, and $\zeta_3$ is supported in an interval of length $\ls
  2^j$ in $(1/4,4)$, with $|\zeta_3' |\ls 2^{-j}$.  We integrate by
  parts with respect to $\theta$:
  \begin{align*}
    \tilde{K}_{j,a}(s,y)& =\frac{i \zeta_1(0)}{|y|}\int_{0}^\infty
    \int_0^{2\pi} e^{i(|y|\rho+s\la \rho \ra_k) }
    \zeta_2(\varphi)\zeta_3(\rho)\rho d\varphi
    d\rho\\
    &- \frac{i}{|y|}\int_{0}^\infty \int_0^{2\pi} \int_0^\pi
    e^{i(|y|\rho\cos\theta+s\la \rho \ra_k) } \zeta_1'(\theta)d\theta
    \zeta_2(\varphi)\zeta_3(\rho)\rho d\varphi d\rho,
  \end{align*}
  and the properties of $\zeta_1$ and $\zeta_3$ imply
  \[
  |\tilde{K}_{j,a}(s,y)|\ls 2^j|y|^{-1},
  \]
  which completes the proof of \eqref{eq:red-ker}, which implies
  \eqref{eq:ker}.
\end{proof}

\begin{rmk}\label{rmk:gen-str}
  The generalization of Lemma \ref{lem:loc-str} to general dimension
  and non-sharp admissible pairs is obvious, but we do not need it
  here.
\end{rmk}

Now, we consider functions in $f\in L^\infty_t(\R;L^2(\R^3;\C^d)$. We will use $d=1$ for the Klein-Gordon part and $d=4$ for the Dirac part.
For $k \in \Z, k \geq 0$ and $l, k' \in \Z, 0 \leq k',l \leq k$, we define
\[
\| f \|_{L^p_t L^q_x[k;l,k']} : = \left(  \sum_{\kappa \in \mathcal{K}_l}\sum_{n \in \Xi_{k'}} \| \Gamma_{k',n}P_{\kappa}  f \|^2_{L^p_t L^q_x} \right)^\frac12
\]
Note that the above norm for $l=0$ is similar to the one in \eqref{lem:loc-str}. The general case $0 \leq l \leq k$ is needed for technical reasons. 

For $k \geq 0$, we define
\begin{equation}\label{eq:sk}
\begin{split}
  \| f \|_{S_k^\pm} & = \| f \|_{L^\infty_t L^2_x} + \| f \|_{\dot X^{\pm,\frac12,\infty}} \\
  & + \sup_{0 \leq k',l \leq k} \left( 2^{-\frac{k'+k}3} \| f \|_{L^3_t L^6_x[k;l,k']} + 2^{-\frac{k'+k}6} \| f \|_{L^6_t L^3_x[k;l,k']} \right).
\end{split}
\end{equation}
Note that if $k'=k$ and $l=0$, that is no additional localization is
provided, the last two norms are simply the standard Strichartz
estimates $L^3_t L^6_x$ and $L^6_t L^3_x$ available for the wave
equation in $\R^3$.

In the nonlinear estimates we will use that $\| P_{\leq 0} f
\|_{S_{\leq 0}^\pm}$ also dominates (by interpolation and the Sobolev
embedding) the localized Strichartz norms (with $k=0$) available in
the high frequency structure.

Next, we consider boundedness properties of certain multipliers.
\begin{lem}
  \label{lem:stable} {\it i)} Let $s_1,s_2\in \{+,-\}$.  For any
  $k_1,k_2\in \N_0$, $1\leq l\leq \min(k_1,k_2)+10$,
  $\kappa_1,\kappa_2\in \mathcal{K}_l$ with
  $\dist(s_1\kappa_1,s_2\kappa_2)\ls 2^{-l}$, $v_1,v_2\in \C^4$, we
  have
  \begin{equation}\label{eq:stable1}
    |\la \Pi_{s_1}(2^{k_1}\omega(\kappa_1))v_1,\beta\Pi_{s_2}(2^{k_2}\omega(\kappa_2))v_2\ra |\ls 2^{-l}|v_1||v_2|
  \end{equation}

 Fix $k \in \N_0$. All the statements below are made for functions localized at frequency $2^k$, i.e. they satisfy $f =\tilde{P}_{k} f$. 

  {\it ii)} For any $1\leq l\leq k+10$, $\kappa \in
  \mathcal{K}_l$, $f \in S^{\pm}_{k}$, we have
  \begin{equation}\label{eq:stable2}
    \|[\Pi_{\pm}(D)-\Pi_{\pm}(2^{k}\omega(\kappa))]P_{\kappa} f \|_{S^{\pm}_{k}}\ls 2^{-l}\|P_{\kappa} f \|_{S^{\pm}_{k}},
  \end{equation}
and similarly in $L^p_tL^q_x$-norms.

  {\it iii)} For any $j \in \Z$, the operators
  $Q_{j}^\pm $ are uniformly bounded on
  $S^\pm_k$.

{\it iv)} For any $l \in \N_0$, $\kappa\in
  \mathcal{K}_l$ and $j \in \Z$ with $j \geq k-2l-100$ the operators
  $Q_{>j}^\pm \tilde{P}_\kappa$ and $Q_{\leq
    j}^\pm \tilde{P}_\kappa$ are uniformly bounded on
  $S^\pm_k$.

  {\it v)} For any $k' \in \N_0$ and $j \in \Z$ satisfiying $k'\leq
  k $ and $j\geq 2k'-k$, the operators $Q_{>j}^\pm$ and $Q_{\leq
    j}^\pm$ are uniformly disposable in the sense that
  \begin{align*}
\sup_{0\leq l \leq k} \left(  2^{-\frac{k'+k}3}  \| Q_{{>j \atop [\leq j]}}^\pm f \|_{L^3_t L^6_x[k;l,k']} 
+ 2^{-\frac{k'+k}6} \| Q_{{>j \atop [\leq j]}}^\pm f \|_{L^6_t L^3_x[k;l,k']} \right)
    \ls \|f\|_{S_k^\pm}.
  \end{align*}
Further, similar estimates for  $Q_{>j}^\pm$ and $Q_{\leq
    j}^\pm$ hold with a bound $\la k' \ra$  as long as $j\succeq -k'$.
\end{lem}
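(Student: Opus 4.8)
The plan is to settle the symbol bounds i) and ii) by direct computation, and to reduce the operator bounds iii)--v) to a single mechanism for absorbing a sharp modulation cutoff on a space-time Lebesgue norm. For i), I would use that $\Pi_{-s_2}(\xi_2)$ is self-adjoint (its symbol $\tfrac12[I-s_2\tfrac{\xi_2\cdot\al+\be}{\la\xi_2\ra}]$ being a Hermitian projection, since $\al^j,\be$ are Hermitian) together with \eqref{eq:com} in the form $\be\Pi_{s_2}(\xi_2)=\Pi_{-s_2}(\xi_2)\be+s_2\la\xi_2\ra^{-1}\be$ to rewrite
\[
\la \Pi_{s_1}(\xi_1)v_1,\be\Pi_{s_2}(\xi_2)v_2\ra=\la \Pi_{-s_2}(\xi_2)\Pi_{s_1}(\xi_1)v_1,\be v_2\ra+s_2\la\xi_2\ra^{-1}\la\Pi_{s_1}(\xi_1)v_1,\be v_2\ra .
\]
Applying Lemma \ref{lem:PiPi} to the product $\Pi_{-s_2}(\xi_2)\Pi_{s_1}(\xi_1)$ (its two cases being $s_1=-s_2$ and $s_1=s_2$) bounds the whole expression by $\mathcal{O}(\angle(s_1\xi_1,s_2\xi_2))+\mathcal{O}(\la\xi_1\ra^{-1}+\la\xi_2\ra^{-1})$ times $|v_1||v_2|$; evaluating at $\xi_i=2^{k_i}\omega(\kappa_i)$, the angle is $\approx\dist(s_1\kappa_1,s_2\kappa_2)\ls 2^{-l}$ and $\la\xi_i\ra^{-1}\approx 2^{-k_i}\ls 2^{-l}$ because $l\le \min(k_1,k_2)+10$, which is \eqref{eq:stable1}.

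For ii), on $\supp(P_\kappa\tilde P_k)$ the matrix symbol $\Pi_{\pm}(\xi)-\Pi_{\pm}(2^k\omega(\kappa))$ is smooth, of size $\ls 2^{-l}$ (the degree-zero part $\tfrac{\xi\cdot\al+\be}{\la\xi\ra}$ varies by $\ls 2^{-l}$ over a cap of radius $2^{-l}$, while the remaining $\mathcal{O}(\la\xi\ra^{-1})$-piece has size $\ls 2^{-k}\ls 2^{-l}$), with derivatives obeying the natural bounds relative to the box $\{|\xi|\approx 2^k\}\cap\supp\tilde P_\kappa$ of dimensions $2^k\times 2^{k-l}\times 2^{k-l}$. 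Hence the associated operator is $2^{-l}$ times a multiplier with an $L^1(\R^3)$ kernel, so it has norm $\ls 2^{-l}$ on every $L^q_x$ and on $L^p_tL^q_x$; since it commutes with $Q_j^\pm$, $P_{\kappa'}$ and $\Gamma_{k',n}$, boundedness by $2^{-l}$ on the $L^\infty_tL^2_x$ and $\dot X^{\pm,\frac12,\infty}$ parts of $S_k^\pm$ is immediate from Plancherel, and on the localized $L^p_tL^q_x$ parts one reorganizes the caps and cubes (using finite overlap of $\mathcal{K}_{l'}$ and the elementary square-function inequalities, valid since $3,6\ge 2$) to reduce to the $L^p_tL^q_x$-bound just proved, the prefactors $2^{-(k'+k)/3},2^{-(k'+k)/6}$ in $S_k^\pm$ absorbing any change of localization scale.

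For iii)--v) I would treat the three components of $S_k^\pm$ separately. On $L^\infty_tL^2_x$: conjugating by $e^{\pm it\la D\ra}$ turns each of $Q_j^\pm$, $Q_{>j}^\pm$, $Q_{\le j}^\pm$ into a Littlewood--Paley type Fourier multiplier in the single variable $t$, uniformly bounded on $L^\infty_t(L^2_x)$, and $\tilde P_\kappa$ in iv) merely commutes through. On $\dot X^{\pm,\frac12,\infty}$ the estimates are immediate from almost-orthogonality. The essential point is the localized Strichartz part: decompose $\{|\xi|\approx 2^k\}$ into boxes $\Box$ on which $\la\xi\ra$ differs from an affine function $A(\xi)=a+\xi\cdot v$ by at most $2^{j-10}$, with matching derivative bounds; a cap $\kappa\in\mathcal{K}_l$ is affine at scale $\ls 2^{k-2l}$ and a cube $n\in\Xi_{k'}$ at scale $\ls 2^{2k'-k}$, which is exactly why iv) requires $j\ge k-2l-100$ and v) requires $j\ge 2k'-k$; for iii), where no restriction on $j$ is imposed, one instead refines the localization furnished by the $S_k^\pm$-norm down to the needed scale (never below unit cubes, so the refinement level $k''$ satisfies $k''\le k'$ and the built-in prefactors absorb it), with a crude Bernstein bound -- again governed by those prefactors -- for the residual very small modulation pieces. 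On such a $\Box$, with a smooth cutoff $P_\Box$ and writing $r:=\la\xi\ra-A(\xi)$, one computes the space-time kernel of $Q_j^\pm P_\Box$ to be $\check\rho_j(t)\,e^{\mp ita}\,K_{\Box,t}(x\mp tv)$; using $|t|\ls 2^{-j}$ on the effective support of $\check\rho_j$ and $|tr(\xi)|\ls 2^{-10}$, the bump $\chi_\Box e^{\mp itr}$ is again adapted to $\Box$, so $\|\check\rho_j\|_{L^1_t}\ls 1$ and $\sup_t\|K_{\Box,t}\|_{L^1_x}\ls 1$; thus $Q_j^\pm P_\Box$ is a $t$-convolution with an $L^1$ kernel composed with the measure-preserving spatial shift $F(s,x)\mapsto F(s,x\mp(t-s)v)$, hence bounded on $L^p_tL^q_x$ with a uniform constant, and the same computation applies verbatim to $Q_{>j}^\pm P_\Box$ and $Q_{\le j}^\pm P_\Box$. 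Summing the boxes back up uses square-function estimates for disjoint rectangles, available for $L^p_tL^q_x$ with $p,q\ge 2$ and thus for both $(3,6)$ and $(6,3)$; the logarithmic loss $\la k'\ra$ in the last assertion of v) is the only point where one sums the $\ls k'$ dyadic modulation blocks between $2^{-k'}$ and $2^{2k'-k}$, each estimated after a further cube decomposition.

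The main obstacle is precisely this disposability of the modulation cutoffs on the \emph{localized} $L^p_tL^q_x$-norms: unlike $L^\infty_tL^2_x$, these norms are not invariant under conjugation by $e^{\pm it\la D\ra}$, so one cannot simply turn $Q_j^\pm$ into a one-dimensional $t$-multiplier. The resolution is the affine-box factorization above -- modulation cutoff $=$ (measure-preserving spatial translation)$\,\circ\,$(one-dimensional $t$-multiplier) -- supplemented by the bookkeeping that the caps and cubes built into the $S_k^\pm$-norm are affine at the modulation scale precisely under the stated hypotheses on $j$, and that the $\ell^2$-reassembly of the boxes is licensed by the square-function estimates, which hold here because the admissible pairs have both exponents $\ge 2$.
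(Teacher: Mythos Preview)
Your arguments for i) and ii) are correct and essentially the same as the paper's (the paper writes out the symbol decomposition for ii) a bit more explicitly, but the content is identical).

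For iii)--v) you take a genuinely different route from the paper. The paper never tries to show that $Q_j^\pm$, $Q_{>j}^\pm$, $Q_{\le j}^\pm$ are bounded \emph{from the localized Strichartz norms to themselves}; instead it always bounds the Strichartz part of $\|Q_j^\pm f\|_{S_k^\pm}$ by the $\dot X^{\pm,\frac12,\infty}$ part of $\|f\|_{S_k^\pm}$. Concretely, for iii) it writes $Q_j^\pm f(t)=\int e^{it\tau}e^{\mp it\la D\ra}\big[\mathcal{F}_t(e^{\pm i\cdot\la D\ra}f)(\tau)\big]\tilde\rho_j(\tau)\,d\tau$, applies the homogeneous localized Strichartz estimate (Lemma~\ref{lem:loc-str}) to each $\tau$-slice, and closes with Cauchy--Schwarz over $|\tau|\sim 2^j$, obtaining $2^{-\frac{k'+k}{p}}\|Q_j^\pm f\|_{L^p_tL^q_x[k;l,k']}\ls 2^{j/2}\|\tilde Q_j^\pm f\|_{L^2}\ls\|f\|_{\dot X^{\pm,\frac12,\infty}}$, valid for \emph{all} $j\in\Z$. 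For iv) and v) the paper simply uses Bernstein in time and space to reduce $L^p_tL^q_x$ to $L^2$ and again lands on $\dot X^{\pm,\frac12,\infty}$; the constraints $j\ge k-2l-100$ and $j\ge 2k'-k$ are exactly what makes the resulting exponent nonpositive, and the logarithmic loss $\la k'\ra$ comes from summing $Q_{j'}^\pm$ over $j'\in(j,2k'-k]$ via iii). Your affine-box/Galilean-shift factorization is a legitimate and interesting alternative: it gives honest $L^p_tL^q_x\to L^p_tL^q_x$ boundedness of the modulation cutoffs on each box and reassembles by square-function estimates (valid here since $p,q\ge 2$). For iv) and v) this works cleanly, because the cap or cube already makes $\la\xi\ra$ affine at scale $2^j$.

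There is, however, a gap in your treatment of iii). Your affine-box argument, after refining to unit cubes $k''=0$, covers only $j\ge -k$ (the affine error on a unit cube at radius $2^k$ is $\sim 2^{-k}$). For $j<-k$ you invoke a ``crude Bernstein bound governed by the prefactors'', but this does not close: after Galilean boost on a unit cube the effective $\tau$-support has size $\sim 2^{-k}$ (not $2^j$), so Bernstein yields a factor $2^{-k(\frac12-\frac1p)}$ while $\|Q_j^\pm f\|_{L^2}\ls 2^{-j/2}\|f\|_{\dot X^{\pm,\frac12,\infty}}$, and the combined exponent $-\frac{k'}{p}-\frac{k+j}{2}$ is positive for $j<-k$. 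The statement of iii) is for all $j\in\Z$, so this range must be covered; the clean fix is precisely the paper's transference step, which replaces your Bernstein-in-$t$ loss $2^{-k(\frac12-\frac1p)}$ by the sharp Strichartz gain $2^{(k'+k)/p}$ for free solutions and recovers the missing $2^{j/2}$ from Cauchy--Schwarz on the $\tau$-interval. (In the application to Proposition~\ref{PTR} only $j\succeq -\min(k,k_1,k_2)$ occurs, so your argument would suffice there, but not for the lemma as stated.)
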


\begin{proof}
  The identity \eqref{eq:com} implies
  \begin{align*}
    &\la \Pi_{s_1}(2^{k_1}\omega(\kappa_1))v_1,\beta\Pi_{s_2}(2^{k_2}\omega(\kappa_2))v_2\ra\\
    =&\la
    \beta\Pi_{-s_2}(2^{k_2}\omega(\kappa_2))\Pi_{s_1}(2^{k_1}\omega(\kappa_1))v_1,v_2\ra+s_2\la
    2^{k_2}\ra^{-1} \la\beta\Pi_{s_1}(2^{k_1}\omega(\kappa_1))v_1,v_2 \ra,
  \end{align*}
  hence \eqref{eq:stable1} follows from estimates \eqref{PiPi} and
  Cauchy-Schwarz.

  In order to prove \eqref{eq:stable2}, it suffices to consider the
  case of the $+$ sign. We write the matrix-valued symbol $p$ of
  $2[\Pi_{+}(D)-\Pi_{+}(2^{k}\omega(\kappa))]P_{k}\tilde{P}_{\kappa}$ as
  \begin{align*}
    p(\xi)&=2[\Pi_{+}(\xi)-\Pi_{+}(2^{k}\omega(\kappa))]\tilde{\rho}_k(\xi)\tilde{\eta}_\kappa(\xi)\\
    &=\tilde{\rho}_k(\xi)\tilde{\eta}_\kappa(\xi)\Big[\frac{\xi}{\la \xi\ra}-\frac{2^{k}\omega(k)}{\la 2^k\ra} \Big]\cdot \alpha+\tilde{\rho}_k(\xi)\tilde{\eta}_\kappa(\xi)\Big[\frac{1}{\la \xi\ra }-\frac{1}{\la 2^k\ra }\Big]\beta\\
    &=:p_1(\xi)+p_2(\xi)
  \end{align*}
  We further decompose
  \begin{align*}
    p_1(\xi)=&\tilde{\rho}_k(\xi)\tilde{\eta}_\kappa(\xi)\Big[\frac{|\xi|}{\la \xi\ra}-\frac{2^{k}}{\la 2^k\ra} \Big]\omega(\kappa)\cdot \alpha+\tilde{\rho}_k(\xi)\tilde{\eta}_\kappa(\xi)\frac{|\xi|}{\la \xi\ra}\Big[\frac{\xi}{|\xi|}-\omega(\kappa) \Big]\cdot \alpha\\
    &=:p_{11}(\xi)+p_{12}(\xi).
  \end{align*}
We denote the Fourier-multiplication operators defined by the symbols above by $P_2(D), P_{11}(D),P_{12}(D)$. Obviously, the properties of $\tilde{\rho}_k$ imply that
  \[\|P_2(D)\|_{L^p_x\to L^p_x} \ls 2^{-k}, \quad
  \|P_{11}(D)\|_{L^p_x\to L^p_x}\ls 2^{-k}, \text{ for any }1< p <
  \infty,\] and the properties of $\tilde{\eta}_\kappa$ imply that
  \[\|P_{12}(D)\|_{L^p_x\to L^p_x}\ls 2^{-l}, \text{ for any }1<p<
  \infty.\] The claim follows from the definition of the space
  $S^+_k$.

  Part iii) needs to be proved for the Strichartz norms only. For the
  operator $Q_j^\pm$ this is an easy consequence of the
  well-known transference principle. Indeed,
  \[Q_j^\pm f(t)=\int e^{it\tau} e^{\mp i t \la D\ra }
  \mathcal{F}_t(e^{\pm i t \la D\ra
  }f)(\tau)\tilde{\rho}_j(\tau)d\tau,\] hence by Lemma
  \ref{lem:loc-str} we obtain
  \begin{align*}
    &2^{-\frac{k'+k}{p}} \| Q_j^\pm f \|_{L^p_t L^q_x[k;l,k']}\\
    \ls{} & \|\mathcal{F}_t(e^{\pm i t \la D\ra }  f)\tilde{\rho}_j\|_{L^1_\tau L^2_{\xi}} 
    \ls 2^{\frac{j}{2}}\| \tilde{Q}_j^\pm f \|_{L^2}\approx \| f\|_{\dot
      X^{\pm,\frac12,\infty}}.
  \end{align*}

  In order to prove Part iv), we apply Sobolev inequalities to
  obtain for any $\kappa' \in \mathcal{K}_{l'}$, $n \in \Xi_{k'}$
  \begin{align*}
    &2^{-\frac{k'+k}{p}} \| \Gamma_{k',n}P_{\kappa'}  Q_j^\pm P_{\kappa}f \|_{L^p_t L^q_x}\\
    \ls & 2^{-\frac{k'+k}{p}}
    2^{j(\frac{1}{2}-\frac{1}{p})}2^{(k'+\min(2k-2l,2k'))(\frac{1}{2}-\frac{1}{q})}\|
    \Gamma_{k',n}P_{\kappa'}Q_j^\pm P_{\kappa}  f \|_{L^2}.
  \end{align*}
  Summing up the squares w.r.t.\ $\kappa' ,n$ yields
  \begin{equation}\label{eq:u}
       2^{-\frac{k'+k}{p}} \|  Q_j^\pm P_{\kappa}f \|_{L^p_t L^q_x[k;l',k']} 
      \ls  2^{\frac{\min(k-2l,2k'-k)-j}{p}} 2^{\frac{j}{2}}\| Q_j^\pm f \|_{L^2},
  \end{equation}
  which we finally sum up with respect to $j \geq j_0\geq k-2l-100$ to
  obtain
  \begin{align*}
     2^{-\frac{k'+k}{p}}  \| Q_{>j_0}^\pm P_{\kappa}f \|_{L^p_t L^q_x[k;l',k']} 
    \ls \| f \|_{\dot X^{\pm,\frac12,\infty}}
  \end{align*}
  The remaining claim in Part iv) follows from $Q_{\leq
    j}^\pm=I-Q_{>j}^\pm$.

  Part v) follows similarly from \eqref{eq:u}. The last claim for $Q_{>j}^\pm$ follows by applying Part iii) and Part v) to
\[
Q_{>j}^\pm=Q_{>2k'-k}^\pm+\sum_{j< j' \leq 2k'-k}Q_{j'}^\pm,
\]
because the number of terms in the second sum is bounded by $\la k'\ra$. The claim for $Q_{\leq j}^\pm=I-Q_{>j}^\pm$ follows, too.
\end{proof}

The next Lemma shows why the $S^\pm_k$-semi-norms  are useful in the context of the evolution equation.
\begin{lem}\label{lem:lin}
For any $k \in \N_0$, $u_0=\tilde{P}_ku_0\in L^2(\R^3;\C^d)$ and $f=\tilde{P}_kf\in L^1_t(\R,L^2(\R^3;\C^d))$, let
\[
u(t)=e^{\mp it \la D\ra}u_0 +i \int_0^t e^{\mp i(t-s) \la D\ra}f(s)ds.
\]
Then, $u=\tilde{P}_k u$ is the unique solution of
\[
-i\partial_t u \pm \la D \ra u=f,
\]
and $u\in C(\R,L^2(\R^3;\C^d))$ and
\begin{equation}\label{eq:lin}
\|u\|_{S^\pm_k}\ls{} \|u_0\|_{L^2(\R^3)}+\sup_{g \in G}\Big|\int_{\R^{1+3}}\la f, g\ra_{\C^d} dxdt \Big|
\end{equation}
provided that the right hand side of \eqref{eq:lin} is finite, where $G$ is defined as
the set of all $g=\tilde{P}_k g\in L^\infty_t (\R;L^2(\R^3;\C^d))$ such that $\|g\|_{S^{\pm}_k}=1$.
\end{lem}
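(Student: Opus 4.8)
The plan is to prove the two parts of Lemma~\ref{lem:lin} separately: first the elementary facts (uniqueness, continuity, and the explicit Duhamel formula), and then the quantitative bound \eqref{eq:lin}. For the first part, one verifies directly that the stated $u$ solves $-i\partial_t u\pm\la D\ra u=f$ with $u(0)=u_0$; uniqueness follows from the energy identity for the homogeneous equation, and $u\in C(\R,L^2)$ follows from strong continuity of the free evolution $e^{\mp it\la D\ra}$ on $L^2$ together with $f\in L^1_tL^2_x$. The spectral-projection identity $u=\tilde P_k u$ is preserved because $\tilde P_k$ commutes with $e^{\mp it\la D\ra}$ and with the time integral.

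For the quantitative estimate, I would bound each of the constituent norms in the definition \eqref{eq:sk} of $\|\cdot\|_{S_k^\pm}$ by the right-hand side of \eqref{eq:lin}. The key structural point is that all these norms are (semi-)norms of the form $\|u\|_{S_k^\pm}\approx\sup_{g}|\la u,g\ra_{L^2_{t,x}}|$ over a suitable predual set, or more precisely that $S_k^\pm$ embeds in a space whose norm is computed by duality against the set $G$ of normalized test functions; this is exactly why $G$ is introduced with the same semi-norm. Concretely, for the free part $e^{\mp it\la D\ra}u_0$ one uses: the conservation $\|e^{\mp it\la D\ra}u_0\|_{L^\infty_tL^2_x}=\|u_0\|_{L^2}$; the $\dot X^{\pm,1/2,\infty}$ bound, which is immediate since $\widehat{e^{\mp it\la D\ra}u_0}$ is supported on the characteristic surface $\tau\pm\la\xi\ra=0$, so only $Q_0^\pm$ sees it and $\|Q_0^\pm e^{\mp it\la D\ra}u_0\|_{L^2_{t,x}}$ is handled by the transference/Plancherel argument already used in the proof of Lemma~\ref{lem:stable}(iii) (here one must be slightly careful that $e^{\mp it\la D\ra}u_0\notin L^2_{t,x}$ globally, so this step should be understood after the standard interpretation of $\dot X^{\pm,1/2,\infty}$ for free solutions, or one first proves the inhomogeneous estimate and absorbs the free part by writing $u_0$ as data at time $0$); and the localized Strichartz norms $L^3_tL^6_x[k;l,k']$ and $L^6_tL^3_x[k;l,k']$, which are controlled by $\|u_0\|_{L^2}$ via Lemma~\ref{lem:loc-str} after inserting the harmless extra cap-localization $P_\kappa$ (summing the squares over $\kappa\in\mathcal K_l$ costs only the finite-overlap constant, since the $L^2$-norms are almost orthogonal in $\kappa$).

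For the Duhamel term $i\int_0^t e^{\mp i(t-s)\la D\ra}f(s)\,ds$, the cleanest route is the standard duality argument together with the Christ--Kiselev lemma, or rather the $U^p$--$V^p$ circumvention advertised in the introduction: one shows that for any $g\in G$,
\[
\Big|\int_{\R^{1+3}}\Big\la i\int_0^t e^{\mp i(t-s)\la D\ra}f(s)\,ds,\; h(t)\Big\ra dx\,dt\Big|\ls \|h\|_{(S_k^\pm)^\ast}\sup_{g\in G}\Big|\int\la f,g\ra\,dxdt\Big|,
\]
by writing the retarded integral and using that the relevant dual norm of $e^{\mp i(\cdot-s)\la D\ra\,*}h$ is controlled by $\sup_{g\in G}|\cdots|$; combined with the $U^p$-atomic decomposition this avoids the usual difficulty that Christ--Kiselev does not interact well with the cap-localizations defining the $[k;l,k']$-norms. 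Equivalently, and perhaps more transparently for the write-up: establish that each linear free-solution estimate above is \emph{stable under Christ--Kiselev} by noting that the localized Strichartz norms are given by an $\ell^2$-sum of genuine space-time Lebesgue norms with $p>2$, hence the retarded truncation is bounded, while the $L^\infty_tL^2_x$ and $\dot X^{\pm,1/2,\infty}$ pieces are handled directly (the energy inequality and the transference bound both tolerate a time cutoff $\mathbf 1_{s<t}$). Summing the dyadic-in-$j$ estimate for $\dot X^{\pm,1/2,\infty}$ uses $f=\tilde P_k f$ so that the output modulation is essentially $\gtrsim$ a fixed threshold only after pairing, and one uses $\|f\|_{L^1_tL^2_x}<\infty$ merely to make the Duhamel integral well-defined before replacing the crude $L^1_tL^2_x$ control by the sharp $\sup_{g\in G}$ control.

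The main obstacle I anticipate is precisely the passage from the crude bound $\|u\|_{S_k^\pm}\ls\|u_0\|_{L^2}+\|f\|_{L^1_tL^2_x}$ (which is routine) to the sharp bound with $\|f\|_{L^1_tL^2_x}$ replaced by $\sup_{g\in G}|\int\la f,g\ra|$: this is a genuine duality statement for an $\ell^2$-direct-sum of Strichartz-type norms equipped with extra frequency-cap localizations, and the standard Christ--Kiselev argument does not directly apply because the $[k;l,k']$-localizations are not scalar Fourier multipliers compatible with the time-truncation in the obvious way. This is exactly the point the authors flag in the introduction as meriting an independent treatment via $U^p,V^p$ spaces, so I expect the real work to be an $U^p$-atomic/duality argument showing that the retarded operator $f\mapsto \mathbf 1_{s<t}\,e^{\mp i(t-s)\la D\ra}f$ maps the predual of $S_k^\pm$ (a sum of $L^{p'}_tL^{q'}_x$-type spaces plus $L^1_tL^2_x$ plus a $\dot X^{\pm,-1/2,1}$-piece) boundedly into $\ell^2$-sums, uniformly in the cap parameters; everything else is bookkeeping with the finite-overlap property of $\mathcal K_l$ and $\Xi_{k'}$.
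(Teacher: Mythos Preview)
Your proposal correctly identifies the structure of the argument and the central difficulty---the passage from the crude bound $\|u\|_{S_k^\pm}\ls\|u_0\|_{L^2}+\|f\|_{L^1_tL^2_x}$ to the sharp dual bound---but it does not actually execute this step. You say ``I expect the real work to be an $U^p$-atomic/duality argument'' and leave it there; this is precisely the content of the lemma, so the proposal is a diagnosis rather than a proof. Moreover, your fallback suggestion that the $\dot X^{\pm,1/2,\infty}$ piece ``tolerates a time cutoff $\mathbf 1_{s<t}$'' is suspect: the modulation projections $Q_j^\pm$ do not commute with sharp time cutoffs, and there is no direct Christ--Kiselev mechanism for that component. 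Likewise, trying to understand the full predual of $S_k^\pm$ (an intersection of an $L^\infty_tL^2_x$ norm, an $\dot X^{\pm,1/2,\infty}$ seminorm, and a supremum over $l,k'$ of $\ell^2$-Strichartz norms) and then prove a matrix of retarded estimates against each piece would be considerably messier than necessary.

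The paper's route is cleaner and sidesteps both issues by inserting an intermediate space. One defines the localized norms
\[
\|u\|_{U^\pm_{k;l,k'}}=\Big(\sum_{\kappa\in\mathcal K_l}\sum_{n\in\Xi_{k'}}\|\Gamma_{k',n}P_\kappa u\|_{U^2_{\pm\la D\ra}}^2\Big)^{1/2},\qquad
\|u\|_{V^\pm_{k;l,k'}}=\Big(\sum_{\kappa,n}\|\Gamma_{k',n}P_\kappa u\|_{V^2_{\pm\la D\ra}}^2\Big)^{1/2},
\]
checks that these are monotone in $(l,k')$ in opposite directions, and sets $U^\pm_k:=U^\pm_{k;k,0}$, $V^\pm_k:=V^\pm_{k;k,0}$ at the finest scale. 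Since the localized Strichartz estimate of Lemma~\ref{lem:loc-str} transfers to $U^p_{\pm\la D\ra}$-atoms and hence to $V^2_{\pm\la D\ra}$, and since $V^2_{\pm\la D\ra}$ controls both $L^\infty_tL^2_x$ and $\dot X^{\pm,1/2,\infty}$, one obtains the single embedding $\|u\|_{S_k^\pm}\ls\|u\|_{V^\pm_k}\ls\|u\|_{U^\pm_k}$. Now one applies the standard $U^2$ duality theorem (e.g.\ \cite[Prop.~2.10]{HHK}) \emph{once}, at the level of $U^\pm_k$, to get
\[
\|u\|_{U^\pm_k}\ls\|u_0\|_{L^2}+\sup_{\|h\|_{V^\pm_k}=1}\Big|\int\la f,h\ra\,dxdt\Big|,
\]
and the claim follows because $\|g\|_{S^\pm_k}\ls\|g\|_{V^\pm_k}$ makes the supremum over $G$ at least as large. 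The point is that duality is carried out in the $U^2$/$V^2$ framework, where the retarded estimate is already built in, rather than against the awkward predual of $S^\pm_k$; the complicated Strichartz-type structure enters only through the embedding $S^\pm_k\hookleftarrow V^\pm_k$, which is a statement about free solutions and requires no Christ--Kiselev argument at all.
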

\begin{proof}
Without the localization in $L^3_t L^6_x, L^6_t L^3_x$
the linear theory above is standard using $X^{s,b}$ theory and the Christ-Kiselev Lemma \cite{CK}. It is likely that one can adapt
the Christ-Kiselev Lemma to cover the localized versions of $L^3_t L^6_x, L^6_t L^3_x$ and their dual structures as well, but we do not pursue this strategy here. Instead, we will give a rather short proof using the theory of $U^2$ and $V^2$ spaces, see e.g.\ \cite{KT,HHK,KVT} for details.
We recall that for $1<p<\infty$ the atomic space $U^p_{\pm\la D\ra}$ is defined via its atoms
\[
a(t)=\sum_{k=1}^K \mathds{1}_{[t_{k-1},t_{k})}(t)e^{\mp i t \la D\ra} \phi_k , \quad \sum_{k=1}^K\|\phi_k\|_{L^2}^p=1,
\]
where $\{t_k\}$ is a partition, $t_K=+\infty$.

As a companion space we use the space $V^p_{\pm\la D\ra}$ of right-continuous functions $v$ such that $t\mapsto e^{\pm i t \la D\ra}v(t)$ is of bounded $p-$variation. We have $ V^2_{\pm\la D\ra}\hookrightarrow U^p_{\pm\la D\ra}$ for $p>2$.

For $0\leq l,k'\leq k$ we define
\begin{equation}\label{eq:loc-u2}
\|u\|_{U^\pm_{k;l,k'}}:=\Big(\sum_{\kappa \in \mathcal{K}_l}\sum_{n \in \Xi_{k'}} \| \Gamma_{k',n}P_{\kappa} u \|^2_{U^2_{\pm\la D\ra}}\Big)^{\frac12}.
\end{equation}
Then, we have
\begin{equation}\label{eq:loc-v2}
\|u\|_{V^\pm_{k;l,k'}}:=\Big(\sum_{\kappa \in \mathcal{K}_l}\sum_{n \in \Xi_{k'}} \| \Gamma_{k',n}P_{\kappa} u \|^2_{V^2_{\pm\la D\ra}}\Big)^{\frac12}\ls \|u\|_{U^\pm_{k;l,k'}}
\end{equation}
It is easy to show that the $U^\pm_{k;l,k'}$-norms are decreasing if we localize to smaller scales, i.e.\
\[
\|u\|_{U^\pm_{k;l,k'}}\ls \|u\|_{U^\pm_{k;\tilde{l},\tilde{k}'}} \text{ if } \tilde{l}\leq l \text{ and } \tilde{k'}\geq k',
\]
and the $V^\pm_{k;l,k'}$-norms are increasing if we localize to smaller scales, i.e.\
\[
\|u\|_{V^\pm_{k;l,k'}}\ls \|u\|_{V^\pm_{k;\tilde{l},\tilde{k}'}} \text{ if } \tilde{l}\geq l \text{ and } \tilde{k'}\leq k'.
\]
Set $U^\pm_{k}=U^\pm_{k;k,0}$ and $V^\pm_{k}=V^\pm_{k;k,0}$.

Strichartz estimates for admissible pairs $(p,q)$ hold for $U^p_{\pm\la D\ra}$-functions (which is easily verified for atoms), hence all for $V^2_{\pm\la D\ra}$-functions. For any $0\leq k',l\leq k$ we have
\[
2^{-\frac{k'+k}{p}} \Big(\sum_{\kappa \in \mathcal{K}_l}\sum_{n \in \Xi_{k'}} \| \Gamma_{k',n}P_{\kappa}  u \|^2_{L^p_tL^q_x}\Big)^{\frac12}
\ls{} \|u\|_{V^\pm_{k;l,k'}}\ls \|u\|_{V^\pm_{k}}.
\]

We also have $V^\pm_{k}\hookrightarrow V^2_{\pm\la D\ra}$ and $V^2_{\pm\la D\ra}$-norm dominates both the $L^\infty_t L^2_x$-norm and the $\dot{X}^{\pm,\frac12,\infty}$-seminorm. Hence,
\[\|u\|_{S^\pm_k}\ls \|u\|_{V^\pm_{k}}\ls \|u\|_{U^\pm_{k}}.\]
Now, we can use the $U^2$ duality theory (see e.g. \cite[Prop.~2.10]{HHK}, and \cite[Prop.~2.11]{HTT} for a frequency-localized version), to conclude that
\[
\|u\|_{U^\pm_k}\ls{}\|u_0\|_{L^2(\R^3)}+\sup_{h \in H}\Big|\int_{\R^{1+3}}\la f, h\ra_{\C^d} dxdt \Big|,
\]
where $H$ is defined as the set of all $h=\tilde{P}_k h$ such that $\|h\|_{V^\pm_k}=1$.
The claim now follows by using again $\|g\|_{S^{\pm}_k}\ls \|g\|_{V^\pm_k}$.
\end{proof}

\begin{rmk}\label{rmk:u2}
In fact, we have proved a stronger result: In the setting of Lemma \ref{lem:lin}, provided that the right hand side of \eqref{eq:lin} is finite, we can upgrade this estimate to
\[
\|u\|_{U^{\pm}_{k}}\ls{} \|u_0\|_{L^2(\R^3)}+\sup_{g \in G}\Big|\int_{\R^{1+3}}\la f, g\ra_{\C^d} dxdt \Big|.
\]
\end{rmk}

Our resolution space $S^{\pm,\sigma}$ corresponding the Sobolev regularity $\sigma$ --used in Subsection \ref{subsect:proof}-- will be the space of functions in $C(\R,H^\sigma(\R^3;\C^d))$ such that
\[
\| f \|_{S^{\pm,\sigma}} = \| P_{\leq 0} f \|_{S_{\leq 0}^\pm} +
\left( \sum_{k \geq 1} 2^{2\sigma k} \| P_k f \|_{S_k^\pm}^2
\right)^\frac12<+\infty,
\]
which is obviously a Banach space.
\section{Nonlinear estimates and the proof of the main result}\label{sect:nl}

Recall \eqref{DKGf} with the convention $M=m=1$ and use the decomposition $\psi=\Pi_+(D)\psi+\Pi_-(D)\psi$
in the nonlinearity (for all three terms). It then suffices to prove
\begin{align*}
  \Big|\int \la \Pi_{s_2}(D)[\Re\phi \, \beta \Pi_{s_1}(D)\psi_1],\psi_2\ra dxdt\Big|& \ls \|\phi\|_{S^{+,\frac12+\eps}}\|\psi_1\|_{S^{s_1,\eps}}\|\psi_2\|_{S^{s_2,-\eps}}\\
  \Big|\int \la D \ra^{-1}\la \Pi_{s_1}(D)\psi_1, \beta
  \Pi_{s_2}(D)\psi_2\ra \, \overline{\phi} dxdt\Big|& \ls
  \|\phi\|_{S^{+,-\frac12-\eps}}\|\psi_1\|_{S^{s_1,\eps}}\|\psi_2\|_{S^{s_2,\eps}}
\end{align*}
for any choice of signs $s_1,s_2\in \{+,-\}$. By symmetry, this
follows from
\begin{equation}\label{eq:red-est}
  \begin{split}
    &\Big|\int \phi \, \la \Pi_{s_1}(D)\psi_1,\beta\Pi_{s_2}(D)\psi_2\ra dxdt\Big|\\
    \ls{} &
    \|\phi\|_{S^{+,\frac12+\eps_0}}\|\psi_1\|_{S^{s_1,\eps_1}}\|\psi_2\|_{S^{s_2,\eps_2}}
  \end{split}
\end{equation}
where $\eps_0,\eps_1,\eps_2\in \{\pm \eps\}$ such that
$\eps_0+\eps_1+\eps_2=\eps$. More precisely, we will prove this  first on the dyadic level, where all integrals are clearly finite, cp.\ Lemma \ref{lem:lin}.
 
\subsection{Estimates for dyadic pieces}
Our aim will be to identify a function $G: \N^3_0 \rightarrow
(0,\infty)$ such that
\begin{equation} \label{G} \sum_{k,k_1,k_2 \in \N_0 \atop \max(k,k_1,k_2)\sim \med(k,k_1,k_2)}
  \frac{G(k,k_1,k_2)a_{k}
  b_{k_1} c_{k_2} }{2^{\frac{k}2}(\min(k,k_1,k_2)+1)^{10}} \ls \| a \|_{l^2} \| b \|_{l^2} \| c \|_{l^2} 
\end{equation}
for all sequences $a=(a_j)_{j \in \N_0}$ etc.\ in $l^2(\N_0)$. We
write $\mathbf{k}=(k,k_1,k_2)$.

Clearly, \eqref{eq:red-est} is implied by the following key result of
this section:
\begin{pro} \label{PTR} Let $s_1,s_2\in\{+,-\}$. There exists a
  function $G$ satisfying \eqref{G} such that for all
  $\phi=P_{k}\phi,\psi_i=P_{k_i}\Pi_{s_i}(D)\psi_i$, $i=1,2$, the
  following estimate holds true:
  \begin{equation} \label{cunn2} \Big| \int \phi \la \psi_1, \beta
    \psi_2 \ra dx dt\Big| \ls{} G(\mathbf{k}) \| \phi \|_{S^{+}_{k}}
    \| \psi_1 \|_{S^{s_1}_{k_1}} \| \psi_2 \|_{S^{s_2}_{k_2}}.
  \end{equation}
\end{pro}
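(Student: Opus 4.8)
The plan is to dyadically decompose all three factors in modulation using the $Q^{\pm}_j$ operators, and to split the analysis according to which modulation is maximal and according to the case dichotomy of Lemma~\ref{lem:res}. Write $j_{\max}=\max(j,j_1,j_2)$ for the modulations attached to $\phi,\psi_1,\psi_2$ respectively, and let $k_{\min}=\min(k,k_1,k_2)$. The first step is to record that, by Lemma~\ref{lem:mod}, the integral vanishes unless $j_{\max}$ is large enough: in Case~1 (either $s_1=+,s_2=-$, or $s_1=-,s_2=+$ with $k\prec\min(k_1,k_2)$) we need $j_{\max}\succeq\max(k,k_1,k_2)$, while in Case~2 we need, after a further angular decomposition into caps $\kappa_1\in\mathcal{K}_l$, $\kappa_2\in\mathcal{K}_l$ with $\dist(s_1\kappa_1,s_2\kappa_2)\geq 2^{-l}$, that $j_{\max}\succeq k_1+k_2-k-2l$. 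Thus in every regime the maximal modulation is bounded below by an explicit quantity, which is exactly what converts a high-modulation output into $\dot X^{\pm,\frac12,\infty}$ smallness, and a cap separation into the angular gain $2^{-l}$ supplied by Lemma~\ref{lem:stable}~i).

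The second step is the core estimate, which I would organize by which factor carries $j_{\max}$. Suppose first the maximal modulation sits on $\phi$. Then one places $\phi$ in $\dot X^{+,\frac12,\infty}$, losing $2^{-j/2}$, estimates the bilinear term $\la\psi_1,\beta\psi_2\ra$ in $L^2_{t,x}$ by a localized Strichartz bilinear estimate — using one factor in $L^3_tL^6_x[k_i;l,k_i']$ and the other in $L^6_tL^3_x[k_i;l,k_i']$ from the definition of $S^{\pm}_{k_i}$ in \eqref{eq:sk}, together with Hölder — and then absorbs the resulting powers of $2^{k_i'}$ and the angular loss against the lower bound for $j$ just obtained; the crucial input is that in Case~2 the symbol bound \eqref{eq:stable1} replaces $\beta\psi_1\otimes\psi_2$ by something with an extra $2^{-l}$, while in Case~1 the purely frequency-driven lower bound $j\succeq\max(\mathbf k)$ is strong enough on its own. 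The symmetric situation where $j_{\max}$ sits on $\psi_1$ or $\psi_2$ is handled by duality, exactly as flagged after Lemma~\ref{lem:PiPi}: one uses Lemma~\ref{lem:lin} and Remark~\ref{rmk:u2} to solve for the high-modulation factor, reducing again to an $L^2_{t,x}$ bilinear estimate for the product of $\phi$ with the other spinor factor. One also has to handle, via part~i) of Lemma~\ref{lem:mod}, the degenerate regime $j_{\max}\prec -k_{\min}$ where the integral is simply zero, and patch the low-frequency pieces using $S^{\pm}_{\leq 0}$ and Sobolev embedding as remarked after \eqref{eq:sk}. Throughout, the disposability statements in Lemma~\ref{lem:stable}~iv)--v) are what let me freely insert and remove the modulation and cap cutoffs without damaging the Strichartz norms.

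The third step is bookkeeping: assembling the dyadic bounds into a single function $G(\mathbf k)$ and verifying the summability condition \eqref{G}. In each regime the estimate produces a factor of the form $2^{-\delta(j_{\max}-(\text{threshold}))}$ from summing the non-maximal modulations geometrically, times powers of $2^{k_i'}$ that are reconciled with $2^{k'/p}$ normalizations, times (in Case~2) $2^{(1/2-)l}$ against $2^{-l}$ from the symbol — so one sums over $l$ up to $\sim\min(k_1,k_2)$ and over the $k_i'$, and is left with $G(\mathbf k)\ls 2^{k/2}(k_{\min}+1)^{10}$ times a small negative power of $\max(\mathbf k)-\med(\mathbf k)$ in the high-high-to-low interaction, which is exactly what \eqref{G} can absorb (the weight $2^{-k/2}(k_{\min}+1)^{-10}$ there is tailored to this). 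I expect the main obstacle to be the resonant-looking regime in Case~2, namely $s_1=s_2$ or $s_1=-,s_2=+$ with $k\succeq\min(k_1,k_2)$, at small angular scales: there the null-structure gain $2^{-l}$ from \eqref{eq:stable1} must be balanced precisely against the angular lower bound on the resonance function \eqref{eq:mod-angle}, $|\mu|\gs 2^{k_1+k_2-k}\angle^2$, and one has to be careful that the extra $(k_{\min}+1)^{10}$ and the interplay with the localized Strichartz exponents $p=3,6$ still close the sum — this is where the non-resonance hypothesis $2M>m$ is genuinely used, and where the argument is most delicate.
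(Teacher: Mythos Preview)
Your outline matches the paper's approach quite closely: decompose by which factor carries the maximal modulation, use Lemma~\ref{lem:mod} to get a lower bound on $j_{\max}$ (either $\succeq\max(\mathbf{k})$ in Case~1 or $\succeq k_1+k_2-k-2l$ in Case~2), extract the null-structure gain $2^{-l}$ from the spinor pairing via Lemma~\ref{lem:stable}, and close with the localized Strichartz norms $L^3_tL^6_x[k;l,k']\times L^6_tL^3_x[k;l,k']$. The paper organizes this as $I_0+I_1+I_2$ (max modulation on $\phi$, $\psi_1$, $\psi_2$ respectively), splits into the two frequency balances $|k-k_2|\leq 10$ and $|k_1-k_2|\leq 10$, and inside each piece further splits the modulation sum at the threshold $\min(k,k_1,k_2)$.

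One point needs correction. When $j_{\max}$ sits on $\psi_1$ or $\psi_2$, you propose to handle it ``by duality\ldots using Lemma~\ref{lem:lin} and Remark~\ref{rmk:u2} to solve for the high-modulation factor.'' This is a misattribution: Lemma~\ref{lem:lin} and Remark~\ref{rmk:u2} are used only in Subsection~\ref{subsect:proof} to pass from the trilinear bound to the contraction estimate, not inside the proof of Proposition~\ref{PTR}. The paper treats $I_1$ and $I_2$ by exactly the same mechanism as $I_0$: the high-modulation spinor goes into $L^2_{t,x}$ via its $\dot X^{\pm,\frac12,\infty}$ norm, $\phi$ goes into $L^3_tL^6_x[k;0,k']$, and the remaining spinor into $L^6_tL^3_x[k_i;l,k']$, with the null-structure gain $2^{-l}$ still coming from the inner product $\la\Pi_{s_1}\psi_1,\beta\Pi_{s_2}\psi_2\ra$ via Lemma~\ref{lem:stable}~i)--ii). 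In particular, the null structure is always between the two spinors, never between $\phi$ and a spinor, so your phrase ``reducing to an $L^2_{t,x}$ bilinear estimate for the product of $\phi$ with the other spinor factor'' is slightly dangerous --- you must extract the $2^{-l}$ from the symbol of $\la\Pi_{s_1}\cdot,\beta\Pi_{s_2}\cdot\ra$ \emph{before} separating $\phi$ from the spinor pair by H\"older.

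Two minor remarks on bookkeeping: the angular scale $l$ is not summed independently but is determined by the modulation via $2l\sim k_1+k_2-k-j_{\max}$, so the sum over $j_{\max}$ is what effectively ranges over $l$; and the logarithmic loss $\la k_{\min}\ra$ from Lemma~\ref{lem:stable}~v) (disposing of $Q^\pm_{\leq j}$ when $j\succeq -k_{\min}$) is precisely what the weight $(k_{\min}+1)^{-10}$ in \eqref{G} is there to absorb.
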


\begin{proof} We denote the integral on the left hand side of
  \eqref{cunn2} by $I(\mathbf{k})$.
  Without restricting the generality of the argument we can assume
  that $k_1 \leq k_2$.
  We decompose
  \[
  I(\mathbf{k})=I_0(\mathbf{k})+I_1(\mathbf{k})+I_2(\mathbf{k})
  \]
  where
  \begin{align*}
    I_0(\mathbf{k}):=&\sum_{j\in \Z} \int Q_{j}^+ \phi \, \la Q^{s_1}_{\leq j}\psi_1, \beta Q^{s_2}_{\leq j}\psi_2 \ra dx dt\\
    I_1(\mathbf{k}):=&\sum_{j_1\in \Z}\int Q_{<j_1}^+ \phi  \,\la Q^{s_1}_{j_1}\psi_1, \beta Q^{s_2}_{\leq j_1}\psi_2 \ra dxdt\\
    I_2(\mathbf{k}):=&\sum_{j_2\in \Z}\int Q_{<j_2}^+ \phi  \,\la
    Q^{s_1}_{<j_2}\psi_1, \beta Q^{s_2}_{j_2}\psi_2 \ra dx dt
  \end{align*}
 Given the symmetry of the estimate in $k_1$ and $k_2$, we split the argument into two cases.

\medskip

  {\bf Case 1:} $|k-k_2 | \leq 10$.

  {\it Contribution of $I_0(\mathbf{k})$:} We split
  $I_0(\mathbf{k})=I_{01}(\mathbf{k})+I_{02}(\mathbf{k})$ according to
  $j<k_1$ and $j\geq k_1$. Then, due to Lemma \ref{lem:mod} there is
  no contribution if $j<k_1$ in the case $s_1=+,s_2=-$. With all other choices of signs, we estimate
  \[
  I_{01}(\mathbf{k}) \ls \sum_{-k_1\preceq j < k_1} \sum_{n,n'\in
    \Xi_{k_1} \atop |n-n'|\preceq k_1} \|\Gamma_{k_1,n} Q_{j}^+ \phi
  \|_{L^2} \| \la Q_{\leq j}^{s_1} \psi_1, \beta Q_{\leq j}^{s_2}
  \Gamma_{k_1,n'}\psi_2 \ra \|_{L^2},
  \]
  where we used orthogonality, and the non-resonance bound \eqref{eq:non-res} to restrict
  the sum to the range $j\succeq -k_1$.  We conclude from Lemma
  \ref{lem:mod} with $2l=k_1+k_2-k-j$ and Lemma \ref{lem:stable}
  \begin{align*}
    &\| \la Q_{\leq j}^{s_1} \psi_1, \beta Q_{\leq j}^{s_2} \Gamma_{k,n'}\psi_2 \ra \|_{L^2}\\
    \ls{} & 2^{-l} \sum_{\kappa_1,\kappa_2 \in \mathcal{K}_l \atop
      \dist(s_1 \kappa_1,s_2\kappa_2)\ls 2^{-l}}\| Q_{\leq j}^{s_1}
    P_{\kappa_1}\psi_1\|_{L^3_t L^6_x}\|Q_{\leq j}^{s_2}
    P_{\kappa_2}\Gamma_{k_1,n'}\psi_2\|_{L^6_t L^3_x}.
  \end{align*}
  By Part v) of Lemma \ref{lem:stable}, the operators $Q_{\leq j}^{\pm}$  are disposable up to a factor $\la k_1\ra$. Then, we apply Cauchy-Schwarz and perform the cube and
  cap summation and obtain
  \begin{align*}
    I_{01}(\mathbf{k}) & \ls\sum_{-k_1\preceq j < k_1} 2^{-\frac{j}2}
    \| \phi \|_{S_{k}} 2^{-\frac{k_1+k_2-k-j}2} 2^{\frac{2k_1}3} \la k_1\ra \|
    \psi_1 \|_{S_{k_1}^{s_1}}
    2^{\frac{k_1+k_2}6} \la k_1\ra \| \psi_2 \|_{S_{k_2}^{s_2}} \\
    & \ls \la k_1\ra^3 2^{\frac{k_1-k_2}3} 2^{\frac{k}2} \| \phi \|_{S^+_{k}}
    \| \psi_1 \|_{S^{s_1}_{k_1}} \| \psi_2 \|_{S^{s_2}_{k_2}}.
  \end{align*}
  In the range $j \geq k_1$, the operators $Q_{\leq j}^{\pm}$  are disposable and a similar argument above with $l=0$,
  i.e.\ no cap decomposition and no gain from the null-structure,
  gives the bound
  \[
  I_{02}(\mathbf{k}) \ls  2^{\frac{k_1-k_2}3} 
  2^{\frac{k}2} \| \phi \|_{S^+_{k}} \| \psi_1 \|_{S^{s_1}_{k_1}} \|
  \psi_2 \|_{S^{s_2}_{k_2}}.
  \]

{\it Contribution of $I_1(\mathbf{k})$:} We split
  $I_1(\mathbf{k})=I_{11}(\mathbf{k})+I_{12}(\mathbf{k})$ according to
 $j_1 < k_1$ and $j_1\geq k_1$. Again, by Lemma \ref{lem:mod} there is
  no contribution if $j_1<k_1$ in the case $s_1=+,s_2=-$.  With all other choices of signs, we can restrict the sum in $I_{11}$ to $j_1\succeq -k_1$, so that by Lemma \ref{lem:mod} with $2l=k_1+k_2-k-j_1\sim k_1-j_1$ we have
\begin{align*}
I_{11}(\mathbf{k})= &\sum_{-k_1\preceq j_1<k_1}\sum_{n,n' \in \Xi_{k_1} \atop |n-n'|\preceq k_1} \sum_{\kappa_1,\kappa_2 \in \mathcal{K}_l \atop
      \dist(s_1 \kappa_1,s_2\kappa_2)\ls 2^{-l}} \Big\{\\
&\int \Gamma_{k_1,n} Q^+_{<j_1} \phi \cdot \la P_{\kappa_1}Q_{j_1}^{s_1} \psi_1, \beta  P_{\kappa_2}\Gamma_{k_1,n'} Q_{\leq j_1}^{s_2}  \psi_2 \ra dx dt\Big\}
  \end{align*}
In view of Lemma \ref{lem:stable}, we decompose
\[
\Pi_{s_i}(D)P_{\kappa_i}=[\Pi_{s_i}(D)-\Pi_{s_i}(2^{k_i}\omega(\kappa_i ))]P_{\kappa_i}+\Pi_{s_i}(2^{k_i}\omega(\kappa_i ))P_{\kappa_i},
\]
and obtain
  \begin{align*}
    & \|\la P_{\kappa_1}Q_{j_1}^{s_1} \psi_1, \beta  P_{\kappa_2}\Gamma_{k_1,n'} Q_{\leq j_1}^{s_2}  \psi_2 \ra\|_{L^{\frac32}_tL^{\frac65}_x}\\
\ls{} & 2^{-l} \|P_{\kappa_1}Q_{j_1}^{s_1} \psi_1\|_{L^2} \| P_{\kappa_2}\Gamma_{k_1,n'} Q_{\leq j_1}^{s_2}  \psi_2\|_{L^6_tL^3_x}.
  \end{align*}
By H\"older's inequality and Cauchy-Schwarz we obtain
  \begin{align*}
  I_{11}(\mathbf{k})  & \ls{} \sum_{-k_1\preceq j_1<k_1} \Big\{ 2^{-\frac{k_1-j_1}2} \| Q^{s_1}_{j_1} \psi_1\|_{L^2} \Big(\sum_{n \in \Xi_{k_1}} \|\Gamma_{k_1,n} Q^+_{<j_1} \phi\|_{L^3_tL^6_x}^2\Big)^{\frac12} \\
& \qquad \qquad \cdot\Big(\sum_{n' \in \Xi_{k_1}} \sum_{\kappa_2 \in \mathcal{K}_l}\| P_{\kappa_2} \Gamma_{k_1,n'} Q_{\leq j_1}^{s_2}  \psi_2\|_{L^6_tL^3_x}^2 \Big)^{\frac12}\Big\}\\
\ls{}& \sum_{-k_1\preceq j_1<k_1} 2^{-\frac{k_1-j_1}2} 2^{-\frac{j_1}{2}} \|\psi_1\|_{S^{s_1}_{k_1}}2^{\frac{k_1+k}{3}}\la k_1\ra\|\phi\|_{S^+_{k}}2^{\frac{k_1+k_2}{6}}\la k_1\ra\|\psi_2\|_{S^{s_2}_{k_2}}\\
    \ls{}& 2^{\frac{k}2} \la k_1\ra^3\| \phi \|_{S^+_{k}} \|\psi_1\|_{S^{s_1}_{k_1}} \| \psi_2 \|_{S^{s_2}_{k_2}},
  \end{align*}
where we have used Lemma \ref{lem:stable} Part v).

  In the range $j_1 \geq k_1$, we forgo the gain from the null-structure in the
  above argument and obtain
\begin{align*}
  I_{12}(\mathbf{k})\ls{}  &  \sum_{j_1\geq k_1}  2^{-\frac{j_1}{2}} \|\psi_1\|_{S^{s_1}_{k_1}}2^{\frac{k_1+k}{3}}\|\phi\|_{S^+_{k}}2^{\frac{k_1+k_2}{6}}\|\psi_2\|_{S^{s_2}_{k_2}}\\
    \ls{}& 2^{\frac{k}2} \| \phi \|_{S^+_{k}} \|\psi_1\|_{S^{s_1}_{k_1}} \| \psi_2 \|_{S^{s_2}_{k_2}}
  \end{align*}
since the operators $Q_{\leq j}^{\pm}$  are disposable.

{\it Contribution of $I_2(\mathbf{k})$:} As above, we split
  $I_2(\mathbf{k})=I_{21}(\mathbf{k})+I_{22}(\mathbf{k})$ according to
 $j_2 < k_1$ and $j_2\geq k_1$. Again, by Lemma \ref{lem:mod} there is
  no contribution if $j_2<k_1$ in the case $s_1=+,s_2=-$, whereas in all other choices of signs, we can restrict the sum in $I_{21}(\mathbf{k})$ to $j_2\succeq -k_1$, so that by Lemma \ref{lem:mod} with $2l=k_1+k_2-k-j_2\sim k_1-j_2$ we repeat the argument for $ I_{11}(\mathbf{k})$ to obtain
\begin{align*}
  I_{21}(\mathbf{k})  & \ls{}
\sum_{-k_1\preceq j_2<k_1} 2^{\frac{k_1+k}{3}}\la k_1\ra\|\phi\|_{S^+_{k}} 2^{-\frac{k_1-j_2}2}  2^{\frac{k_1}{3}}\la k_1\ra \|\psi_1\|_{S^{s_1}_{k_1}} 2^{-\frac{j_2}{2}}\|\psi_2\|_{S^{s_2}_{k_2}}\\
\ls{}& 2^{\frac{k}2} 2^{\frac{k_1-k}{6}} \la k_1\ra^3\| \phi \|_{S^+_{k}} \|\psi_1\|_{S^{s_1}_{k_1}} \| \psi_2 \|_{S^{s_2}_{k_2}}
\end{align*}
For th range $j_2 \geq k_1$, then the same argument as above, but with no gain from the null-structure, gives the bound
\begin{align*}
  I_{22}(\mathbf{k})\ls{}  &  \sum_{j_2\geq k_1}   2^{\frac{k_1+k}{3}}\|\phi\|_{S^+_{k}}  2^{\frac{k_1}{3}} \|\psi_1\|_{S^{s_1}_{k_1}} 2^{-\frac{j_2}{2}}\|\psi_2\|_{S^{s_2}_{k_2}}\\
\ls{}  &2^{\frac{k}2} 2^{\frac{k_1-k}{6}} \| \phi \|_{S^+_{k}} \|\psi_1\|_{S^{s_1}_{k_1}} \| \psi_2 \|_{S^{s_2}_{k_2}}.
\end{align*}

\medskip

  {\bf Case 2:} $|k_1-k_2 | \leq 10$.

  {\it Contribution of $I_0(\mathbf{k})$:} We split
  $I_0(\mathbf{k})=I_{01}(\mathbf{k})+I_{02}(\mathbf{k})$ according to
  $j<k$ and $j\geq k$. Then, due to Lemma \ref{lem:mod} there is
  no contribution if $j<k$ in the case $s_1=+,s_2=-$ or $s_1=-, s_2=+$ and $k \prec \min(k_1,k_2)$.
In all remaining cases, we can restrict the sum in $I_{01}$ to $j\succeq -k$, so that
  \[
  I_{01}(\mathbf{k}) \ls \sum_{-k\preceq j < k} \sum_{n,n'\in
    \Xi_{k} \atop |n-n'|\preceq k} \| Q_{j}^+ \phi
  \|_{L^2} \| \la Q_{\leq j}^{s_1}\Gamma_{k,n} \psi_1, \beta Q_{\leq j}^{s_2}
  \Gamma_{k,n'}\psi_2 \ra \|_{L^2}.
  \]
 We conclude from Lemma \ref{lem:mod} with $2l=k_1+k_2-k-j$ and Lemma \ref{lem:stable} that
  \begin{align*}
    &\| \la Q_{\leq j}^{s_1}\Gamma_{k,n} \psi_1, \beta Q_{\leq j}^{s_2} \Gamma_{k,n'}\psi_2 \ra \|_{L^2}\\
    \ls{} & 2^{-l} \sum_{\kappa_1,\kappa_2 \in \mathcal{K}_l \atop
      \dist(s_1 \kappa_1,s_2\kappa_2)\ls 2^{-l}}\| Q_{\leq j}^{s_1}
    P_{\kappa_1}\Gamma_{k,n}\psi_1\|_{L^3_t L^6_x}\|Q_{\leq j}^{s_2}
    P_{\kappa_2}\Gamma_{k,n'}\psi_2\|_{L^6_t L^3_x}.
  \end{align*}
  By Part v) of Lemma \ref{lem:stable}, the operators $Q_{\leq j}^{\pm}$ are disposable up to a factor $\la k\ra$. Then, we apply Cauchy-Schwarz and perform the cube and
  cap summation and obtain
  \begin{align*}
    I_{01}(\mathbf{k}) & \ls\sum_{-k\preceq j < k} 2^{-\frac{j}2}
    \| \phi \|_{S_{k}} 2^{-\frac{k_1+k_2-k-j}2} 2^{\frac{k+k_1}3}\la k\ra \|
    \psi_1 \|_{S_{k_1}^{s_1}}
    2^{\frac{k+k_2}6} \la k\ra \| \psi_2 \|_{S_{k_2}^{s_2}} \\
    & \ls \la k\ra^3 2^{\frac{k-k_1}2} 2^{\frac{k}2} \| \phi \|_{S^+_{k}}
    \| \psi_1 \|_{S^{s_1}_{k_1}} \| \psi_2 \|_{S^{s_2}_{k_2}}.
  \end{align*}
  Let us now consider the range $j \geq k$. Now, by Part v) of Lemma \ref{lem:stable}, the operators $Q_{\leq j}^{\pm}$ are disposable. In the case $s_1=+$, $s_2=-$ or in the case $s_1=-$, $s_2=+$ and $k \prec \min(k_1,k_2)$, Lemma \ref{lem:mod} implies that there is only a contribution if $j\succeq k_1$. Then, we obtain from the above argument with $l=0$
\begin{align*}
  I_{02}(\mathbf{k}) & \ls  \sum_{j \succeq k_1}2^{-\frac{j}{2}}
  \| \phi \|_{S^+_{k}}  2^{\frac{k+k_1}3} \| \psi_1 \|_{S^{s_1}_{k_1}} 2^{\frac{k+k_2}6}\|
  \psi_2 \|_{S^{s_2}_{k_2}}\\
\ls{}&  2^{\frac{k}2}
  \| \phi \|_{S^+_{k}}  \| \psi_1 \|_{S^{s_1}_{k_1}} \|\psi_2 \|_{S^{s_2}_{k_2}}.
\end{align*}
In the case $s_1=s_2$, \eqref{eq:ft} implies that the integral is nonzero only if the frequencies in the supports of $\widehat{\psi_1}$ and $\widehat{\psi_2}$ make an angle of at most $2^{k-k_1}$, hence, we choose $l=k_1-k$. In the remaining case where $s_1=-$, $s_2=+$ and $k \succeq \min(k_1,k_2)$ we choose $l=0$.
Again, arguing as for $I_{01}(\mathbf{k})$ we obtain
\begin{align*}
  I_{02}(\mathbf{k})& \ls \sum_{ j \geq k} 2^{-\frac{j}2}
    \| \phi \|_{S_{k}} 2^{-l} 2^{\frac{k+k_1}3} \|
    \psi_1 \|_{S_{k_1}^{s_1}}
    2^{\frac{k+k_2}6}  \| \psi_2 \|_{S_{k_2}^{s_2}} \\
& \ls 2^{\frac{k}2}
  \| \phi \|_{S^+_{k}}  \| \psi_1 \|_{S^{s_1}_{k_1}} \|\psi_2 \|_{S^{s_2}_{k_2}}.
\end{align*}

{\it Contribution of $I_1(\mathbf{k})$:} Again, we split
  $I_1(\mathbf{k})=I_{11}(\mathbf{k})+I_{12}(\mathbf{k})$ according to
  $j_1<k$ and $j_1\geq k$. Then, due to Lemma \ref{lem:mod} there is
  no contribution if $j_1<k$ in the case $s_1=+,s_2=-$ or $s_1=-, s_2=+$ and $k \prec \min(k_1,k_2)$.
In all remaining cases, we can restrict the sum in $I_{11}$ to $j_1\succeq -k$, so that by Lemma \ref{lem:mod} with $2l=k_1+k_2-k-j_1$ we have
\begin{align*}
I_{11}(\mathbf{k})= &\sum_{-k\preceq j_1<k}\sum_{n,n' \in \Xi_{k} \atop |n-n'|\preceq k} \sum_{\kappa_1,\kappa_2 \in \mathcal{K}_l \atop
      \dist(s_1 \kappa_1,s_2\kappa_2)\ls 2^{-l}} \Big\{\\
&\int  Q^+_{<j_1} \phi \cdot \la P_{\kappa_1}Q_{j_1}^{s_1}\Gamma_{k,n} \psi_1, \beta  P_{\kappa_2}\Gamma_{k,n'} Q_{\leq j_1}^{s_2}  \psi_2 \ra dx dt\Big\}
  \end{align*}
Using Lemma \ref{lem:stable}, we obtain
  \begin{align*}
    & \|\la P_{\kappa_1}Q_{j_1}^{s_1} \Gamma_{k,n} \psi_1, \beta  P_{\kappa_2}\Gamma_{k,n'} Q_{\leq j_1}^{s_2}  \psi_2 \ra\|_{L^{\frac32}_tL^{\frac65}_x}\\
\ls{} & 2^{-l} \|P_{\kappa_1}Q_{j_1}^{s_1} \Gamma_{k,n} \psi_1\|_{L^2} \| P_{\kappa_2}\Gamma_{k,n'} Q_{\leq j_1}^{s_2}  \psi_2\|_{L^6_tL^3_x}.
  \end{align*}
By H\"older's inequality and Cauchy-Schwarz we obtain
  \begin{align*}
  I_{11}(\mathbf{k})  & \ls{} \sum_{-k\preceq j_1<k} \|Q^+_{<j_1} \phi\|_{L^3_tL^6_x} 2^{-\frac{k_1+k_2-k-j_1}2} \| Q^{s_1}_{j_1} \psi_1\|_{L^2}  \|  Q_{\leq j_1}^{s_2}  \psi_2\|_{L^6_tL^3_x[k_2;l,k]} \\
\ls{}& \sum_{-k\preceq j_1<k} 2^{\frac{2k}{3}}\la k\ra\|\phi\|_{S^+_{k}} 2^{-\frac{k_1+k_2-k-j_1}2}  2^{-\frac{j_1}{2}}\|\psi_1\|_{S^{s_1}_{k_1}}2^{\frac{k+k_2}{6}}\la k\ra\|\psi_2\|_{S^{s_2}_{k_2}}\\
    \ls{}& 2^{\frac{k}2} 2^{\frac{5}{6}(k-k_1)}\la k\ra^3\| \phi \|_{S^+_{k}} \|\psi_1\|_{S^{s_1}_{k_1}} \| \psi_2 \|_{S^{s_2}_{k_2}},
  \end{align*}
where we have also used Lemma \ref{lem:stable} Part v).

 Let us now consider the case $j_1 \geq k$. We use a similar dichotomy as for $I_{02}(\mathbf{k})$. In the case $s_1=+$, $s_2=-$ or in the case $s_1=-$, $s_2=+$ and $k \prec \min(k_1,k_2)$, Lemma \ref{lem:mod} implies that there is only a contribution if $j_1\succeq k_2$.
In that case, we obtain from the above argument with $l=0$
\begin{align*}
  I_{12}(\mathbf{k})\ls{}  &  \sum_{j_1\succeq k_2}  2^{\frac{2k}{3}}\|\phi\|_{S^+_{k}}  2^{-\frac{j_1}{2}}\|\psi_1\|_{S^{s_1}_{k_1}}2^{\frac{k+k_2}{6}}\|\psi_2\|_{S^{s_2}_{k_2}}\\
\ls{}  & 2^{\frac{k}2} 2^{\frac{1}{3}(k-k_2)}\| \phi \|_{S^+_{k}} \|\psi_1\|_{S^{s_1}_{k_1}} \| \psi_2 \|_{S^{s_2}_{k_2}}.
  \end{align*}

In the case $s_1=s_2$, \eqref{eq:ft} implies that the integral is nonzero only if the frequencies in the supports of $\widehat{\psi_1}$ and $\widehat{\psi_2}$ make an angle of at most $2^{k-k_1}$, hence, we choose $l=k_1-k$. In the remaining case where $s_1=-$, $s_2=+$ and $k \succeq \min(k_1,k_2)$ we choose $l=0$.
By the argument above we obtain
\begin{align*}
  I_{12}(\mathbf{k})\ls{}  &  \sum_{j_1\geq k}  2^{\frac{2k}{3}} \|\phi\|_{S^+_{k}} 2^{-l}  2^{-\frac{j_1}{2}}\|\psi_1\|_{S^{s_1}_{k_1}}2^{\frac{k+k_2}{6}}\|\psi_2\|_{S^{s_2}_{k_2}}\\
\ls{}  & 2^{\frac{k}2} 2^{\frac{5}{6}(k-k_1)}\| \phi \|_{S^+_{k}} \|\psi_1\|_{S^{s_1}_{k_1}} \| \psi_2 \|_{S^{s_2}_{k_2}}.
  \end{align*}

{\it Contribution of $I_2(\mathbf{k})$:} This is treated in the same way as $I_1(\mathbf{k})$.
\end{proof}

\begin{rmk}\label{rmk:ang-reg}
  Using $V^2$-based spaces one can avoid the logarithmic divergencies in Part v) of Lemma \ref{lem:stable}. We expect that one would obtain a result in the critical Besov space $\dot B^{0,\eps}_{2,1}\times\dot B^{\frac12,\eps}_{2,1}\times \dot B^{-\frac12,\eps}_{2,1}$, where $\eps>0$ accounts for a bit of
  angular regularity (somewhat strengthening the null-structure and this way eliminating any logarithmic factors). This would improve the result in
  \cite{Wa} (which corresponds to $\eps=1$) in the massive case, however, we will not pursue these
  matters here.
\end{rmk}
\subsection{Proof of Theorem \ref{thm:main}}\label{subsect:proof}
Again, for notational convenience, let $m=M=1$. Fix $\eps>0$. We will construct a solution \[(\psi_+,\psi_-,\phi_+)\in \mathbf{S}^\eps:=S^{+,\eps}\times S^{-,\eps}\times S^{+,\frac{1}{2}+\eps}\] of the system \eqref{DKGf} in integral form, i.e.\
\begin{align*}
\psi_+(t)=&e^{- it \la D \ra}\Pi_+(D)\psi_0 +i\int_0^t e^{-i(t-s) \la D \ra}\Pi_+(D)[\Re\phi_+\beta (\psi_++\psi_-)]ds\\
\psi_-(t)=&e^{it \la D \ra}\Pi_-(D)\psi_0 +i\int_0^t e^{i(t-s) \la D \ra}\Pi_-(D)[\Re\phi_+\beta (\psi_++\psi_-)]ds\\
\phi_+(t)=&e^{- it \la D \ra}\phi_{+,0} +i\int_0^t e^{- i(t-s) \la D \ra}\la D\ra^{-1}\la (\psi_++\psi_-),\beta (\psi_++\psi_-)\ra ds,
\end{align*}
provided that the initial data satisfy
\[\|\psi_{0}\|_{H^\eps(\R^3)}\leq \delta, \quad \|\phi_{+,0}\|_{H^{\frac12+\eps}(\R^3)}\leq \delta,\]
for sufficiently small $\delta>0$. Let $T(\psi_+,\psi_-,\phi_+)$ denote the operator defined by the right hand side of the above formula.

By the results of the previous subsection and Lemma \ref{lem:lin} we conclude
\begin{align*}
&\|T(\psi_+,\psi_-,\phi_+)\|_{\mathbf{S}^\eps}\\
\ls{} &\delta +\|\phi_+\|_{S^{+,\frac{1}{2}+\eps}}(\|\psi_+\|_{S^{+,\eps}}+\|\psi_-\|_{S^{-,\eps}}) +(\|\psi_+\|_{S^{+,\eps}}+\|\psi_-\|_{S^{-,\eps}})^2\\
\ls{} &\delta+\|(\psi_+,\psi_-,\phi_+)\|_{\mathbf{S}^\eps}^2,
\end{align*}
and similar estimates for differences. Hence, in a small closed ball in the complete space $\mathbf{S}^\eps$ we can invoke the contraction mapping principle to obtain a unique solution. Further, continuous dependence on the initial data is an easy consequence.

It remains to prove that these solutions scatter, which we will only do for $t \to +\infty$, the other case being similar.
It suffices to show that for a solution $(\psi_+,\psi_-,\phi_+)\in \mathbf{S}^\eps$ we have convergence of the integrals, i.e.\
\begin{align*}
&\lim_{t \to \infty} \int_0^t e^{- i(-s) \la D \ra}\Pi_+(D)[\Re\phi_+\beta (\psi_++\psi_-)]ds \in H^\eps(\R^3), \\
&\lim_{t \to \infty} \int_0^t e^{i(-s) \la D \ra}\Pi_-(D)[\Re\phi_+\beta (\psi_++\psi_-)]ds \in H^\eps(\R^3),\\
&\lim_{t \to \infty} \int_0^t e^{- i(-s) \la D \ra}\la D\ra^{-1}\la (\psi_++\psi_-),\beta (\psi_++\psi_-)\ra ds \in H^{\frac12+\eps}(\R^3).
\end{align*}
We simply observe that this is a by-product of the linear theory provided by Lemma \ref{lem:lin}.
Indeed, by Remark \ref{rmk:u2} it follows that on the dyadic level these integrals are in fact in $U^{\pm}_k$ and this is square-summable. From this it follows that they are in the space
\[ V^2(\R;H^\eps(\R^3)) \times  V^2(\R;H^\eps(\R^3))
\times  V^2(\R;H^{\frac12+\eps}(\R^3)). 
\]
Functions of bounded $2-$variation have limits at infinity \cite[Prop.~2.2]{HHK} which proves the scattering claim.

\subsection*{Acknowledgement}
The authors thank Sigmund Selberg and Achenef Tesfahun for spotting a couple of typos and flaws in a previous version of the paper.

The first author was supported in part by NSF grant DMS-1001676.  The
second author acknowledges support from the German Research
Foundation, Collaborative Research Center 701. Part of this research
has been carried out while both authors participated in the Trimester
Program \emph{Harmonic Analysis and Partial Differential Equations} at
the Hausdorff Research Institute for Mathematics in Bonn.

\bibliographystyle{plain} \bibliography{dkg-refs}

\end{document}